\newtheorem{theorem}{\color{black}\indent Theorem}[section]
\newtheorem{lemma}{\color{black}\indent Lemma}[section]
\newtheorem{definition}{\color{black}\indent Definition}[section]
\newtheorem{remark}{\color{black}\indent Remark}[section]
\begin{document}
\title{\LARGE\bf Existence of solutions to a perturbed critical biharmonic equation with Hardy potential}
\author{Qi Li$^1$ \qquad Yuzhu Han$^1$\qquad Jian Wang$^{2,\dag}$}
%Existence of two weak solutions for biharmonic equations with the Hardy-Sobolev critical exponent and non-homogeneous perturbation term
 \date{}
 \maketitle

\footnotetext{\hspace{-1.9mm}$^\dag$Corresponding author.\\
Email addresses: pdejwang@ouc.edu.cn(J. Wang).

\thanks{
$^*$Supported by the National Key Research and Development Program of China (grant no.2020YFA0714101).}}
\begin{center}
{\it\small $^1$ School of Mathematics, Jilin University, Changchun 130012, P.R. China}\\
{\it\small $^2$ School of Mathematical Sciences, Ocean University of China, Qingdao 266100, P.R. China}
\end{center}
\date{}
\maketitle

{\bf Abstract}\ In this paper, the following biharmonic elliptic problem
\begin{eqnarray*}
\begin{cases}
\Delta^2u-\lambda\frac{|u|^{q-2}u}{|x|^s}=|u|^{2^{**}-2}u+ f(x,u), &x\in\Omega,\\
u=\dfrac{\partial u}{\partial n}=0, &x\in\partial\Omega
\end{cases}
\end{eqnarray*}
is considered. %2^{**}2^{**}(s)
The main feature of the equation is that it involves a Hardy term and a
nonlinearity with critical Sobolev exponent. By combining a careful analysis of the fibering maps
of the energy functional associated with the problem with the Mountain Pass Lemma,
it is shown, for some positive parameter $\lambda$ depending on $s$ and $q$,
that the problem admits at least one mountain pass type solution under appropriate growth conditions on the nonlinearity $f(x,u)$.

{\bf Keywords} Biharmonic equation; Critical exponent; Mountain Pass Lemma; Hardy term.

{\bf AMS Mathematics Subject Classification 2020:} Primary 35J35; Secondary 35J91.

%35J35 Variational methods for higher-order elliptic
%equations
%35J75 Singular elliptic equations

\section{Introduction}
\setcounter{equation}{0}

In this paper, we are concerned with the following biharmonic elliptic problem
\begin{eqnarray}\label{P1.1}
\ \ \ \ \ \ \ \ \ \ \ \ \begin{cases}
\Delta^2u-\lambda\dfrac{|u|^{q-2}u}{|x|^s}=|u|^{2^{**}-2}u+ f(x,u), &x\in\Omega,\\
u=\dfrac{\partial u}{\partial n}=0, &x\in\partial\Omega,\\
\end{cases}
\end{eqnarray}
where $\Omega\subset \mathbb{R}^{N}(N\geq5)$ is a bounded domain containing $0$ with smooth boundary
$\partial\Omega$, $2\leq q\leq2^{**}(s):=\frac{2(N-s)}{N-4}< 2^{**}:=\frac{2N}{N-4}$ for $0< s\leq4$,
$2\leq q<2^{**}$ for $s=0$, $\lambda$ is a positive parameter, $\Delta^2$ is the biharmonic operator,
and $f(x,t)$: $\overline{\Omega}\times \mathbb{R}\rightarrow \mathbb{R}$ is a continuous  function
and is odd with respect  to $t$ which satisfies the following two assumptions $(f_1)$ and $(f_2)$:

($f_1$) \ $\lim\limits_{t\rightarrow0^+}\dfrac{f(x,t)}{t}=0$ and
$\lim\limits_{t\rightarrow+\infty}\dfrac{f(x,t)}{t^{2^{**}-1}}=0$ uniformly in $x\in \Omega$;

($f_2$) there exists a constant $\rho\in(q,2^{**})$ such that $0<\rho F(x,t)\leq tf(x,t)$ for all
$x\in\Omega$ and $t\in \mathbb{R}\backslash\{0\}$, where $F(x,t)=\int_{0}^{t}f(x,\tau)\mathrm{d}\tau$.
This condition is usually referred to as the Ambrosetti-Rabinowitz superlinear condition.

%\textcolor[rgb]{1.00,0.00,0.00}{($f_3$) for any $x\in \Omega$, %$\dfrac{f(x,t)}{t}$ is strictly increasing about $t>0$.}

%We observe that condition $(f_1)$ implies that
According to $(f_1)$, we observe that, for any $\varepsilon>0$ ,
there exist $C_i(\varepsilon)>0, (i=1,2,3)$ such that
\begin{equation}\label{Mm}
|f(x,t)|\leq\varepsilon t^{2^{**}-1}+C_1(\varepsilon) t,\ x\in\Omega, \ t\in \mathbb{R},
\end{equation}
\begin{equation*}
|f(x,t)|\leq \varepsilon t+C_2(\varepsilon)t^{2^{**}-1},\ x\in\Omega, \ t\in \mathbb{R},
\end{equation*}
and
\begin{equation}\label{Mmm}
 |F(x,t)|\leq \dfrac{1}{2}\varepsilon t^{2}+C_3(\varepsilon)t^{2^{**}},\ x\in\Omega, \ t\in \mathbb{R}.
\end{equation}
As a consequence of ($f_2$), one sees that there exists a constant $C>0$ such that
\begin{equation}\label{Mmmm}
F(x, t)\geq C|t|^\rho,
\end{equation}
 for all $x\in\Omega$ and $t\in \mathbb{R}$.

%$2^*$ and $2^*(s)$ or
%$2_*$ and $2_*(s)$

Due to their wide application in describing a variety of phenomena in physics and other applied sciences,
both local and nonlocal elliptic problems with critical exponents have been extensively studied in recent years,
and remarkable progress has been made on the existence, non-existence and multiplicity of weak solutions to these problems.
For example, Q. Xie et al. \cite{Q. Xie} dealt with the following Kirchhoff type problem involving  critical exponent
\begin{equation}\label{1.5}
\begin{cases}
-(a+b\displaystyle{\int_{\Omega}|\nabla u|^2\mathrm{d}x})\Delta u=f(x, u)+u^5,\quad &x\in\Omega,\\
u(x)=0,\quad& x\in\partial\Omega,
\end{cases}
\end{equation}
where $\Omega\subset \mathbb{R}^3$ is a smooth bounded domain, $a\geq0$, $b>0$ and $f(x,u)$:
$\overline{\Omega}\times \mathbb{R}\rightarrow \mathbb{R}$ is a continuous function.
The existence and multiplicity of solutions to problem \eqref{1.5} were obtained by using variational method.
When the dimension of the space is $4$, D. Naimen \cite{2} considered the following Kirchhoff type problem involving  critical exponent
\begin{equation}\label{55}
\begin{cases}
-(a+b\displaystyle{\int_{\Omega}|\nabla u|^2\mathrm{d}x})\Delta u=\lambda u^q+\mu u^3,\quad  u>0,\quad &x\in\Omega,\\
u(x)=0,\quad& x\in\partial\Omega,
\end{cases}
\end{equation}
where $a,\lambda,\mu>0$, $b\geq0$ and $1\leq q<3$. The existence and non-existence of weak solutions to
problem \eqref{55} were obtained, also by using variational method.

There are also some works dealing with biharmonic equations with critical exponents.
For the case $q=2$, problem \eqref{P1.1} with $\lambda=1$, $0\leq s\leq2$ and
$f(x,u)\equiv0$ was considered by Kang et al. in \cite{Kang}. By using the Sobolev-Hardy
inequality and variational method, they showed that problem \eqref{P1.1} has at least
one nontrivial solution when $N\geq8-s$. Later, the results of \cite{Kang} was generalized by Li et al. in \cite{li},
where problem \eqref{P1.1} with $\lambda>0$, $0\leq s\leq 4$ and $f(x,u)=f(x)$ was investigated.
They proved that there exist at least two nontrivial solutions when the norm of $f$ is appropriately small
and the parameters $\lambda$ and $s$ satisfy some appropriate conditions.
When $q\geq2$, problem \eqref{P1.1} with $f(x,u)\equiv0$ has also been studied.
For example, Yao et al. \cite{4Hardylinj} considered problem \eqref{P1.1} with $0\leq s\leq4$,
and obtained the existence and non-existence of nontrivial solutions to problem \eqref{P1.1},
with the help of Sobolev-Hardy inequality and the Mountain Pass Lemma.
It is worth mentioning that there are many other interesting works on nonlinear elliptic problems with critical exponents,
among the huge amount of which, we only refer the interested reader to
\cite{C. O. Alves,A. Ambrosetti,D. Cao,L. D'Ambrosio,A. Fiscella,X. He,2,1,J. Wang,X. Wang,X. Mingqi,X. Zhong} and the references therein.

Motivated mainly by \cite{Kang,li,4Hardylinj}, it is natural to consider the existence
of weak solutions to problem \eqref{P1.1} with $q\geq2$ and general nonlinearity $f$
that depends on both $x$ and the unknown function $u$.
As far as we know, there have been few works in this direction, mainly because of the following two difficulties.
The first one is the lack of compactness of the Sobolev embedding $H_0^2(\Omega)\hookrightarrow L^{2^{**}}(\Omega)$,
which prevents us from directly establishing the Palais-Smale condition for the associated energy functional.
The second one is caused
by the Hardy term, since we can not establish the compactness of the mapping $u\rightarrow\dfrac{u}{|x|^{2}}$ from $H_0^2(\Omega)$ into $L^2(\Omega)$ when $s=4$.

In this paper, the above two difficulties are overcome by combining a careful analysis of the fibering maps
of the energy functional associated with the problem with the Mountain Pass Lemma \cite{M. Willem} and
Br\'{e}zis-Lieb's lemma \cite{H. Br}, and a mountain pass type solution to problem \eqref{P1.1} is obtained.
Let us explain our strategy in a more detailed way. When $0\leq s<4$, with the help of  Br\'{e}zis-Lieb's lemma,
% \textcolor[rgb]{1.00,0.00,0.00}{and the method introducedby \cite{li}},
we  prove that the energy functional associated with problem \eqref{P1.1}
satisfies the $(PS)_c$ condition for $c<c(S)$ (Lemma \ref{lem3}).
Then, after some careful estimates on the norms of the truncated Talenti functions,
we show that the energy functional has a mountain pass geometry around $0$ and that
the corresponding mountain pass level is small than $c(S)$.
Finally, on the basis of the above two steps, a mountain pass type solution to problem \eqref{P1.1} follows from a standard variational approach.
When $s=4$, by introducing an equivalent norm in $H_0^2(\Omega)$ (Lemma \ref{w}) and applying a variant of Br\'{e}zis-Lieb's lemma,
we can also show that the energy functional satisfies the $(PS)_c$ condition for $c<c(S_\lambda)$ (Lemma \ref{lem2.1}).
Then the mountain pass level is proved to be strictly smaller than $c(S_\lambda)$ with the help of another group of Talenti functions
and the existence of a mountain pass type solution follows.

The remainder of this paper is organized as follows.
In Section 2 we give some notations, definitions and introduce some necessary lemmas.
The main results of this paper are also stated in this section.
In Section 3 we give the detailed proof of the main results.
%In Section 4 we make a conclusion.

\par
\section{Preliminaries}
\setcounter{equation}{0}

In this section, we first introduce some notations and definitions that will be used throughout the paper.
In what follows, we denote by $\|\cdot\|_{p}$ the $L^p(\Omega)$ norm for $1\leq p\leq \infty$,
and denote the norm of the weighted space $L^p(\Omega, |x|^{-s})$ by
\begin{equation*}
\|\cdot\|_{L^p(\Omega, |x|^{-s})}=\big(\int_{\Omega}|x|^{-s} |\cdot|^p\mathrm{d}x\big)^{\frac{1}{p}},\ 1\leq p< \infty.
\end{equation*}
%is $L^p(\Omega, |x|^{-s})$.
  The Sobolev space $H_0^2(\Omega)$ will be equipped with the norm $\|u\|:=\|u\|_{H_0^2(\Omega)}=\|\Delta u\|_2$,
 %\begin{equation*}\label{}
 %\|u\|:=\|u\|_{H_0^2(\Omega)}=
%\begin{cases}
%\|\Delta u\|_2,\quad &s\neq4,\\
%(\int_{\Omega}(|\Delta u_n|^2-\lambda\frac{|u_n|^2}{|x|^4})\mathrm{d}x)^{\frac{1}{2}},\quad& s=4,
%\end{cases}
%\end{equation*}
which is equivalent to the full one due to Poincar\'{e}'s inequality, and its dual space is denoted by $H^{-2}(\Omega)$.
We always use $\rightarrow$ and $\rightharpoonup$ to denote the strong and weak convergence in each Banach space, respectively,
and use $C$, $C_1$, $C_2$,$...$ to denote generic positive constants.
$B_r(x_0)$ is a ball of  radius $r$ centered at $x_0$.
%$u(r)\approx r^{\alpha}$ $(r\rightarrow r_0)$ means there exist two positive constants
%$C_1$ and $C_2$ such that $C_1 r^\alpha\leq |u(r)|\leq C_2 r^\alpha$  as $r\rightarrow r_0$.
For each $t>0$, $O(t)$ denotes
the quantity satisfying $|\frac{O(t)}{t}|\leq C$, $O_1(t)$ means that there exist two positive constants
$C_1$ and $C_2$ such that $C_1t\leq O_1(t)\leq C_2t$, and $o(t)$ means that $|\frac{o(t)}{t}|\rightarrow0$ as $t\rightarrow0$.

%Before stating our main results, we need the following hypotheses on the function $f(x,u)$: $\overline{\Omega}\times \mathbb{R}\rightarrow \mathbb{R}$:

In this paper, we consider weak solutions to problem \eqref{P1.1} in the following sense.
\begin{definition}\label{de2.1}$\mathrm{\bf{(Weak \ solution)}}$
A function $u\in  H_{0}^{2}(\Omega)$ is called a weak solution to problem \eqref{P1.1},
if for all $\varphi\in H_{0}^{2}(\Omega)$, it holds that
\begin{equation*}\label{eq1}
\int_{\Omega}\Delta u\Delta \varphi\mathrm{d}x-\lambda\int_{\Omega}\frac{|u|^{q-2}u\varphi}{|x|^s}\mathrm{d}x
-\int_{\Omega}|u|^{2^{**}-2}u\varphi\mathrm{d}x-\int_{\Omega}f(x,u)\varphi\mathrm{d}x=0.
\end{equation*}
\end{definition}
The energy functional associated with problem \eqref{P1.1} is given by
\begin{equation*}\label{eq2}
I_\lambda(u)=\frac{1}{2}\| u\|^2-\frac{\lambda}{q}\int_{\Omega}\frac{|u|^q}{|x|^s}\mathrm{d}x-\frac{1}{2^{**}}\|u\|_{2^{**}}^{2^{**}}-\int_{\Omega}F(x,u)\mathrm{d}x,\ \forall \ u\in H_{0}^{2}(\Omega).
\end{equation*}
According to the hypotheses $(f_1)-(f_2)$ and $2\leq q<2^{**}$, it is directly verified that
$I_\lambda(u)$ is a $C^1$ functional in $H_{0}^{2}(\Omega)$ (see \cite{li}).
Since $f$ is odd with respect to $t$, which implies that $I_\lambda(u)=I_\lambda(|u|)$,
we may assume that $u\geq0$ in the sequel.

We then introduce a compactness condition known as local (PS) condition or the $(PS)_c$ condition,
which will assist us in finding weak solutions to problem \eqref{P1.1}.

%Before stating the Mountain Pass Lemma which will help us to obtain a weak solution to problem \eqref{P1.1},
%we introduce a definition of a compactness condition that is usually called the $(PS)_c$ condition.

\begin{definition}\label{compactness}($(PS)_c$ condition)
Assume that $X$ is a real Banach space, $I:X\rightarrow \mathbb{R}$ is a $C^1$ functional and $c\in \mathbb{R}$.
We say that $I$ satisfies the $(PS)_c$ condition if any sequence $\{u_n\}\subset X$ such that
\begin{eqnarray*}
I(u_n)\rightarrow c\  in \ \mathbb{R} \ and \ I'(u_{n})\rightarrow 0\ in  \ X^{-1}(\Omega)\ as \ n\rightarrow\infty
\end{eqnarray*}
has a convergent subsequence, where $X^{-1}$ is the dual space of $X$.
\end{definition}

The following three lemmas are crucial in our analysis. The first one is the famous Mountain Pass Lemma,
the second one is the Br\'{e}zis-Lieb's lemma and the third one is its variant.

\begin{lemma}\label{lemM}(Mountain Pass Lemma \cite{M. Willem})
Assume that $(X, \|\cdot\|_X)$ is a real Banach space, $I:X\rightarrow \mathbb{R}$ is a $C^1$ functional
%satisfying the $(PS)_c$ condition, $c$ is a real number,
and there exist $\beta>0$ and $r>0$ such that $I$ satisfies the following  mountain pass geometry:

(i) $I(u)\geq \beta>0$ if $\|u\|_X=r$;

(ii) there exists a $\overline{u}\in X$ such that $\|\overline{u}\|_X>r$ and $I(\overline{u})<0$.

Then there exist a sequence $\{u_n\}\subset X$ such that $I(u_n)\rightarrow c_0$ in $\mathbb{R}$ and $I'(u_{n})\rightarrow 0$ in $X^{-1}$ as $n\rightarrow\infty$,
where $X^{-1}$ is the dual space of $X$ and
\begin{eqnarray*}
c_0:=\inf_{\gamma\in\Gamma}\max_{t\in[0,1]}I(\gamma(t))\geq \beta,
 \ \Gamma=\left\{\gamma\in C([0,1],X): \gamma(0)=0, \gamma(1)=\overline{u}\right\},
\end{eqnarray*}
which is called the mountain level. Furthermore, $c_0$ is a critical vale of $I$ if $I$ satisfies the $(PS)_{c_0}$ condition.
%there exists a $u\in X$ and $\|u\|>\rho$ such that $I(u)=c\geq \alpha>0$ and $I'(u)=0$.
\end{lemma}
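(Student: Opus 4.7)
The plan is to prove the existence of a Palais--Smale sequence at the minimax level $c_0$ by contradiction, via the quantitative deformation lemma. The assumed mountain pass geometry, together with the connectedness of every path $\gamma\in\Gamma$ joining $0$ to $\overline{u}$, forces $\max_{t\in[0,1]}I(\gamma(t))\ge\beta>0$, so $c_0\ge\beta$; this much is trivial. The real content is showing that for some sequence $\{u_n\}\subset X$ one has $I(u_n)\to c_0$ and $I'(u_n)\to 0$ in $X^{-1}$.

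First I would suppose, for contradiction, that no such sequence exists. Then there are constants $\varepsilon_0,\delta>0$ with
\begin{equation*}
\|I'(u)\|_{X^{-1}}\ge\delta\quad\text{whenever }u\in X\text{ and }|I(u)-c_0|\le 2\varepsilon_0.
\end{equation*}
By the standard quantitative deformation lemma (see Willem, Lemma 2.3), one can produce, for any $\varepsilon\in(0,\varepsilon_0)$ sufficiently small, a continuous $\eta\in C([0,1]\times X,X)$ such that: (a) $\eta(0,u)=u$ for all $u$; (b) $\eta(t,u)=u$ whenever $|I(u)-c_0|\ge 2\varepsilon_0$; (c) $t\mapsto I(\eta(t,u))$ is non-increasing; and (d) $\eta(1,\cdot)$ maps the sublevel set $\{I\le c_0+\varepsilon\}\setminus N$ into $\{I\le c_0-\varepsilon\}$, where $N$ is a small neighbourhood of the (assumed empty) critical set at level $c_0$.

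Next I would pick $\varepsilon<\min\{\varepsilon_0,c_0/2\}$ small enough so that $I(0)=0<c_0-2\varepsilon_0$ and $I(\overline{u})<c_0-2\varepsilon_0$; property (b) then guarantees $\eta(1,0)=0$ and $\eta(1,\overline{u})=\overline{u}$. By the definition of $c_0$ as an infimum, choose $\gamma\in\Gamma$ with $\max_{t\in[0,1]}I(\gamma(t))\le c_0+\varepsilon$. The deformed path $\widetilde{\gamma}(t):=\eta(1,\gamma(t))$ still lies in $\Gamma$ because its endpoints are fixed and it is continuous. But property (d) yields $\max_{t\in[0,1]}I(\widetilde{\gamma}(t))\le c_0-\varepsilon$, contradicting the minimax definition of $c_0$. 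Hence the assumed $(PS)_{c_0}$ sequence must exist.

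The main obstacle is, as usual, the construction of the deformation $\eta$: one flows along a locally Lipschitz pseudogradient vector field $V$ of $I$ (cut off outside the strip $|I-c_0|\le 2\varepsilon_0$ and outside $N$), exploiting the uniform lower bound $\|I'\|\ge\delta$ to guarantee strict descent of $I$ along the flow lines. Since this construction and the resulting deformation lemma are standard and carried out in detail in Willem's monograph, the final statement that $c_0$ is a critical value whenever $I$ satisfies $(PS)_{c_0}$ follows immediately: the extracted $(PS)_{c_0}$ sequence then admits a convergent subsequence whose limit is a critical point at level $c_0$.
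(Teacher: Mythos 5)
The paper does not prove this lemma at all: it is quoted verbatim from Willem's monograph, and your argument is precisely the standard proof given there --- negate the existence of a $(PS)_{c_0}$ sequence to obtain a uniform bound $\|I'(u)\|_{X^{-1}}\ge\delta$ on the strip $|I(u)-c_0|\le 2\varepsilon_0$, invoke the quantitative deformation lemma to push an almost-optimal path $\gamma$ with $\max_t I(\gamma(t))\le c_0+\varepsilon$ down to level $c_0-\varepsilon$ while fixing its endpoints, and contradict the minimax definition of $c_0$. Two minor bookkeeping points: it is $\varepsilon_0$ that must be shrunk (not $\varepsilon$) to ensure $\max\{I(0),I(\overline{u})\}<c_0-2\varepsilon_0$, which is legitimate because the gradient bound persists on narrower strips; and you implicitly use $I(0)=0$, a hypothesis the paper's statement omits but which is needed for the deformation to fix the endpoint $0$ and which does hold in the paper's application, where $I_\lambda(0)=0<\beta\le c_0$. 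With those caveats the proof is correct and coincides with the cited source.
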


\begin{lemma}\label{Lieb}(Br\'{e}zis-Lieb's lemma \cite{H. Br})
Let %$\Omega\subset \mathbb{R}^N$ be a domain,
%(i) If $u_n$, $u\in L^s(\Omega)(s\geq1)$ are such that
%\begin{equation*}
%u_{n}\rightharpoonup u \ in \ L^{s}(\Omega) \ as\ n\rightarrow\infty,\
%u_{n}\rightarrow u \ a.e. \ in \ \Omega \ as\ n\rightarrow\infty,
%\end{equation*}
%Then,
%\begin{equation*}
%\lim_{n\rightarrow\infty}(\|u_{n}\|_s^s-\|u_{n}-u\|_s^s)=\|u\|_s^s.
%\end{equation*}
 $p\in(1,\infty)$. Suppose that $\{u_n\}$
is a bounded sequence in $L^p(\Omega)$ and $u_n\rightarrow u$ a.e. in  $\Omega$.  Then
\begin{eqnarray*}
\begin{split}
&\lim_{n\rightarrow\infty}(\|u_n\|_p^p-\|u_n-u\|_p^p)=\|u\|_p^p.\\
\end{split}
\end{eqnarray*}
\end{lemma}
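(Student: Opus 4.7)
The plan is to derive the identity from an approximate additivity of the map $t \mapsto |t|^p$, which I would quantify through the elementary pointwise inequality: for every $\varepsilon > 0$ there exists $C_\varepsilon > 0$ (depending only on $p$ and $\varepsilon$) such that
\[
\bigl||a+b|^p - |a|^p\bigr| \le \varepsilon |a|^p + C_\varepsilon |b|^p, \qquad \forall\, a,b \in \mathbb{R}.
\]
Applied with $a = u_n - u$ and $b = u$, this will control $\bigl||u_n|^p - |u_n - u|^p - |u|^p\bigr|$ by $\varepsilon |u_n - u|^p$ plus an $L^1$-dominated remainder, which is the structural ingredient driving the entire argument.

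First I would verify that $u \in L^p(\Omega)$: since $u_n \to u$ almost everywhere and $\sup_n \|u_n\|_p < \infty$, Fatou's lemma gives $\|u\|_p \le \liminf_n \|u_n\|_p < \infty$. Next, fixing $\varepsilon > 0$, I would set
\[
W_n^\varepsilon := \bigl(\bigl||u_n|^p - |u_n - u|^p - |u|^p\bigr| - \varepsilon |u_n - u|^p\bigr)_+.
\]
The pointwise inequality above yields $0 \le W_n^\varepsilon \le C_\varepsilon |u|^p$, and the a.e. convergence $u_n \to u$ together with the continuity of $t \mapsto |t|^p$ forces $W_n^\varepsilon \to 0$ almost everywhere. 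Since $C_\varepsilon |u|^p \in L^1(\Omega)$, the dominated convergence theorem then implies $\int_\Omega W_n^\varepsilon \, dx \to 0$ as $n \to \infty$.

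Combining these estimates gives
\[
\limsup_{n \to \infty} \int_\Omega \bigl||u_n|^p - |u_n - u|^p - |u|^p\bigr| \, dx \le \varepsilon \sup_n \|u_n - u\|_p^p.
\]
Since $\{u_n\}$ is bounded in $L^p(\Omega)$ and $u \in L^p(\Omega)$, the sequence $\{u_n - u\}$ is bounded as well, so the right-hand side is $O(\varepsilon)$. Letting $\varepsilon \to 0$ then yields
\[
\lim_{n \to \infty} \int_\Omega \bigl(|u_n|^p - |u_n - u|^p - |u|^p\bigr) \, dx = 0,
\]
which rearranges to the stated identity.

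The main obstacle is establishing the pointwise inequality; the rest is essentially a dominated convergence argument. The inequality itself is not entirely trivial, but it can be handled by separating the cases $|b| \le \delta |a|$ (where local Lipschitz-type control on $t \mapsto |t|^p$ applies, producing an $O(|b|/|a|)$ relative error that can be absorbed into $\varepsilon|a|^p$) and $|b| \ge \delta |a|$ (where the bound $|a+b|^p + |a|^p \lesssim |b|^p$ holds, absorbing the left-hand side into $C_\varepsilon |b|^p$). Once this pointwise bound is secured, the remaining steps are standard applications of Fatou's lemma and dominated convergence.
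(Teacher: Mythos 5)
Your proof is correct and is precisely the classical argument of Br\'{e}zis and Lieb from the reference \cite{H. Br} that the paper cites; the paper itself gives no proof of this lemma, only the citation, so there is nothing to diverge from. One cosmetic point: after applying the pointwise inequality with $a=u_n-u$, $b=u$ and then the triangle inequality to insert the extra $-|u|^p$ term, the domination should read $W_n^\varepsilon\le (C_\varepsilon+1)|u|^p$ rather than $C_\varepsilon|u|^p$, which of course changes nothing in the dominated convergence step.
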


\begin{lemma}\label{lemB}(\cite{G. Li})
Let $r>1$, $q\in[1,r]$ and $\delta\in[0,\dfrac{Nq}{r})$. If $\{u_n\}$
is a bounded sequence in $L^r(\mathbb{R}^N, |x|^{-\frac{\delta r}{q}})$ and $u_n\rightarrow u$ a.e. in $\mathbb{R}^N$.
Then
\begin{eqnarray*}
\begin{split}
&\lim_{n\rightarrow\infty}\int_{\mathbb{R}^N}\Big|\frac{|u_n|^q}{|x|^\delta}
-\frac{|u_n-u|^q}{|x|^\delta}-\frac{|u|^q}{|x|^\delta}\Big|^\frac{r}{q}\mathrm{d}x=0,\\
&\lim_{n\rightarrow\infty}\int_{\mathbb{R}^N}\Big|\frac{|u_n|^{q-1}u_n}{|x|^\delta}
-\frac{|u_n-u|^{q-1}(u_n-u)}{|x|^\delta}-\frac{|u|^{q-1}u}{|x|^\delta}\Big|^\frac{r}{q}\mathrm{d}x=0.
\end{split}
\end{eqnarray*}
\end{lemma}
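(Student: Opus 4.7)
The plan is to adapt the classical Br\'{e}zis-Lieb argument to the weighted setting with an outer exponent $r/q\ge 1$. Set $v_n:=u_n-u$, so that $v_n\to 0$ almost everywhere on $\mathbb{R}^N$. Fatou's lemma applied to $|u_n|^r/|x|^{\delta r/q}\ge 0$ shows that $u\in L^r(\mathbb{R}^N,|x|^{-\delta r/q})$, whence $\{v_n\}$ is also uniformly bounded in that space.

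The analytic engine would be the standard pointwise inequality: for every $\varepsilon>0$ there exists $C(\varepsilon)>0$ such that, for all $a,b\in\mathbb{R}$,
\begin{equation*}
\big||a+b|^q-|a|^q-|b|^q\big|\;\le\;\varepsilon|a|^q+C(\varepsilon)|b|^q,
\end{equation*}
together with the analogous bound for $|a+b|^{q-1}(a+b)-|a|^{q-1}a-|b|^{q-1}b$. These follow from the mean value theorem combined with Young's inequality when $q>1$, and from $\bigl||a+b|-|a|-|b|\bigr|\le 2\min\{|a|,|b|\}$ when $q=1$. Taking $a=v_n$, $b=u$, dividing by $|x|^\delta$, raising to the power $r/q\ge 1$, and invoking the convexity bound $(x+y)^{r/q}\le 2^{r/q-1}(x^{r/q}+y^{r/q})$, one arrives at
\begin{equation*}
\Big|\frac{|u_n|^q-|v_n|^q-|u|^q}{|x|^\delta}\Big|^{r/q}\;\le\;A\,\varepsilon^{r/q}\,\frac{|v_n|^r}{|x|^{\delta r/q}}+B(\varepsilon)\,\frac{|u|^r}{|x|^{\delta r/q}},
\end{equation*}
with constants $A$, $B(\varepsilon)$ independent of $n$.

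Following Br\'{e}zis and Lieb, I would then introduce the nonnegative truncations
\begin{equation*}
W_{n,\varepsilon}:=\max\Big\{0,\;\Big|\frac{|u_n|^q-|v_n|^q-|u|^q}{|x|^\delta}\Big|^{r/q}-A\,\varepsilon^{r/q}\,\frac{|v_n|^r}{|x|^{\delta r/q}}\Big\}.
\end{equation*}
They are dominated by the fixed integrable function $B(\varepsilon)|u|^r/|x|^{\delta r/q}$ and tend to $0$ almost everywhere, so Lebesgue's dominated convergence theorem yields $\int_{\mathbb{R}^N}W_{n,\varepsilon}\,\mathrm{d}x\to 0$. Combined with the uniform $L^r(\mathbb{R}^N,|x|^{-\delta r/q})$-boundedness of $\{v_n\}$, this gives
\begin{equation*}
\limsup_{n\to\infty}\int_{\mathbb{R}^N}\Big|\frac{|u_n|^q-|v_n|^q-|u|^q}{|x|^\delta}\Big|^{r/q}\,\mathrm{d}x\;\le\;A\,\varepsilon^{r/q}\,M,
\end{equation*}
where $M:=\sup_n\int_{\mathbb{R}^N}|v_n|^r/|x|^{\delta r/q}\,\mathrm{d}x<\infty$. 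Letting $\varepsilon\to 0$ closes the argument, and the second identity follows from the identical truncation procedure applied to the vector-valued map $t\mapsto |t|^{q-1}t$ in place of $t\mapsto|t|^q$.

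The main obstacle I expect is the correct packaging of the pointwise $\varepsilon$-inequality under the outer exponent $r/q$: the Br\'{e}zis-Lieb-style truncation has to be calibrated so that the part of the integrand which does not converge pointwise is controlled, uniformly in $n$, by the small factor $\varepsilon^{r/q}$ times a quantity bounded independently of $n$. The restriction $\delta<Nq/r$ does not enter the estimates explicitly; it only guarantees that $|x|^{-\delta r/q}$ is locally integrable at the origin, so that the weighted spaces appearing in the hypothesis are populated in a neighborhood of $0$.
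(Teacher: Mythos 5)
Your argument is correct. Note that the paper itself gives no proof of this lemma: it is quoted verbatim from the cited reference of G.~Li and T.~Yang, and the standard proof there (and in Br\'{e}zis--Lieb's original paper) is exactly the truncation scheme you describe: the pointwise inequality $\bigl||a+b|^q-|a|^q-|b|^q\bigr|\le\varepsilon|a|^q+C(\varepsilon)|b|^q$ with $a=u_n-u$, $b=u$, division by $|x|^\delta$, the convexity bound for the outer exponent $r/q\ge1$, the truncations $W_{n,\varepsilon}$ dominated by $B(\varepsilon)|u|^r|x|^{-\delta r/q}$, dominated convergence, and $\varepsilon\to0$. Your closing observations are also accurate: the hypothesis $\delta<Nq/r$ plays no role in the estimates themselves, and the second identity only requires the analogous pointwise inequality for $t\mapsto|t|^{q-1}t$, which holds for $q\ge1$ (trivially for $q=1$, and via the mean value theorem plus Young's inequality for $q>1$). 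In short, you have supplied a complete, self-contained proof of a result the paper merely cites.
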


The main results of the paper are summarized into the theorem below, which brings this part to a close.

\begin{theorem}\label{th} Assume that both $(f_1)$ and $(f_2)$ hold.

$(i)$ For $0\leq s< 4$ and $q=2$, if $N\geq8-s$ or $\max\{\frac{N}{N-4},\frac{8}{N-4}\}<\rho<2^{**}$,
then problem \eqref{P1.1} has at least one nonnegative solution for $0<\lambda<\lambda_{s,2}$,
where $\lambda_{s,2}$ is given in \eqref{Vv}.

$(ii)$ For $0\leq s< 4$ and   $2<q<2^{**}(s)$, if $q>\max\{\frac{N-s}{N-4},\frac{2(4-s)}{N-4}\}$
or $\max\{\frac{N}{N-4},\frac{8}{N-4}\}<\rho<2^{**}$, then problem \eqref{P1.1} has at least one
nonnegative solution for all $\lambda>0$.

%$(ii)$ For $0\leq s< 4$ and   $q>2$, if $\max\{\frac{N-s}{N-4},\frac{2(4-s)}{N-4}\}< q<2^{**}(s)$
%or $\max\{\frac{N}{N-4},\frac{8}{N-4}\}<\rho<2^{**}$, then problem \eqref{P1.1} has at least one
%nonnegative solution for all $\lambda>0$.

$(iii)$ For $s=4$ and $q=2$, if $\max\{\frac{N}{\gamma_\lambda}, \frac{4(N-2-\gamma_\lambda)}{N-4}\}<\rho<2^{**}$,
then problem \eqref{P1.1} has at least one nonnegative solution for $\lambda\in(0,\lambda_{4,2})$,
where $\gamma_\lambda$ and $\lambda_{4,2}$ are given in Lemma \ref{r} and Remark \ref{remark1}, respectively.
\end{theorem}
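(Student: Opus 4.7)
The plan is to apply the Mountain Pass Lemma (Lemma \ref{lemM}) to the functional $I_\lambda$ in each of the three cases, and then to extract a critical point from the resulting Palais--Smale sequence via the compactness statements of Lemma \ref{lem3} (for cases (i)--(ii), where $0\leq s<4$) and Lemma \ref{lem2.1} (for case (iii), where $s=4$). Since $f$ is odd in $t$ so that $I_\lambda(u)=I_\lambda(|u|)$, any critical point may be replaced by its modulus, giving the nonnegativity in the conclusion for free. The whole proof therefore reduces to two tasks: verifying the mountain pass geometry, and showing that the corresponding mountain pass level lies strictly below the compactness threshold $c(S)$, respectively $c(S_\lambda)$ in case (iii).

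For the geometry on a small sphere $\|u\|=r$, I would control $\int_\Omega |u|^q|x|^{-s}\,\mathrm{d}x$ by $C\|u\|^q$ via the Sobolev--Hardy inequality, bound $\int_\Omega F(x,u)\,\mathrm{d}x$ using \eqref{Mmm} with small $\varepsilon$ together with the Sobolev embedding, and handle the critical term through $\|u\|_{2^{**}}^{2^{**}}\leq S^{-2^{**}/2}\|u\|^{2^{**}}$. In case (i), $q=2$, so the quadratic part collapses to $(\tfrac12-\tfrac{\lambda}{2\lambda_{s,2}})\|u\|^2$, which is positive exactly when $\lambda<\lambda_{s,2}$; in case (ii), $q>2$ makes the Hardy term of higher order and the lower bound $I_\lambda(u)\geq\beta>0$ holds for every $\lambda>0$; case (iii) is analogous after replacing $\|\cdot\|$ with the equivalent norm $\|\cdot\|_\lambda$ of Lemma \ref{w}, the restriction $\lambda<\lambda_{4,2}$ ensuring its coercivity. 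For the far side, I would fix $\varphi\in C_c^\infty(\Omega)\setminus\{0\}$ with $\varphi\geq 0$ and use \eqref{Mmmm} to conclude that
\begin{equation*}
I_\lambda(t\varphi)\leq\frac{t^2}{2}\|\varphi\|^2-\frac{t^{2^{**}}}{2^{**}}\|\varphi\|_{2^{**}}^{2^{**}}-Ct^\rho\int_\Omega\varphi^\rho\,\mathrm{d}x\longrightarrow-\infty,\qquad t\to+\infty,
\end{equation*}
so that some $\overline{u}=T\varphi$ with $I_\lambda(\overline{u})<0$ exists.

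The main obstacle is the mountain pass level estimate $c_0<c(S)$ (respectively $c_0<c(S_\lambda)$). I would test along the ray $t\mapsto tU_\varepsilon$, where $U_\varepsilon$ is a truncated Talenti extremal concentrated at the origin --- in case (iii) replaced by the modified extremals for $\Delta^2-\lambda|x|^{-4}$ underlying Lemma \ref{r}. Expanding $\|U_\varepsilon\|^2$, $\|U_\varepsilon\|_{2^{**}}^{2^{**}}$, $\int_\Omega|U_\varepsilon|^q|x|^{-s}\,\mathrm{d}x$ and $\int_\Omega U_\varepsilon^\rho\,\mathrm{d}x$ as $\varepsilon\to 0$, maximising $I_\lambda(tU_\varepsilon)$ in $t$, and comparing with $c(S)$, the leading order reproduces $c(S)$ and one must show the subleading correction is strictly negative. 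This correction is produced by either the Hardy sink or the superlinear sink, and the dimension/exponent hypotheses of the theorem are exactly what makes one of them decay slower than the critical error: $N\geq 8-s$ in case (i) and $q>\max\{(N-s)/(N-4),2(4-s)/(N-4)\}$ in case (ii) let the Hardy term carry the estimate, while $\rho>\max\{N/(N-4),8/(N-4)\}$ (respectively $\rho>\max\{N/\gamma_\lambda,4(N-2-\gamma_\lambda)/(N-4)\}$ in case (iii)) lets $F$ do so instead. Once this strict inequality is secured, Lemma \ref{lemM} produces a $(PS)_{c_0}$ sequence, Lemma \ref{lem3} or Lemma \ref{lem2.1} upgrades it to a strongly convergent subsequence, and its limit is a nontrivial critical point, which after passing to its modulus gives the desired nonnegative weak solution to \eqref{P1.1}.
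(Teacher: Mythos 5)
Your proposal is correct and matches the paper's own proof essentially step for step: the mountain pass geometry is verified exactly as in \eqref{mp-1}--\eqref{7.2} (splitting $q=2$ from $q>2$ and using \eqref{Hhh} when $q=2$), the level estimate along the ray through the truncated Talenti functions (Hardy-modified ones in case (iii)) is precisely the content of Lemmas \ref{lem4} and \ref{r}, and the compactness below $c(S)$, resp.\ $c(S_\lambda)$, is Lemmas \ref{lem3} and \ref{lem2.1}. The only point to tidy is that the endpoint $\overline{u}$ of the admissible paths must be taken on the ray through the concentrated Talenti function itself (as the paper does with $\overline{u}=t_{u^*}u^*$), not through an arbitrary $\varphi\in C_c^\infty(\Omega)$; otherwise the ray $t\mapsto tU_\varepsilon$ used for the level estimate is not an admissible path in $\Gamma$ and does not bound $c_0$.
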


\begin{remark}\label{remark2}  A typical example of a function that meets the conditions  $(f_1)-(f_2)$ is
\begin{equation*}
f(x,t)=\sum_{i=1}^{k}C_i(x)|t|^{q_i-2}t,\ for\ x \in\overline{\Omega}, \ t \in \mathbb{R},
\end{equation*}
where $k\in \mathbb{N}$, $q< q_i\leq2^{**}(s)$ for $0<s\leq 4$, $q< q_i<2^{**}$ for $s=0$, and $C_i\in C(\overline{\Omega})$ is positive.
\end{remark}
%\begin{corollary}\label{corollary} Assume that $s=4$, $q=2$ and $f(x,t)=\mu u$, where $\mu>0$.  If  $\lambda\in(0,\lambda^*)$ and $\mu\in(0,\mu_1)$, where $\mu_1$ is the first eigenvalue of the problem
%\begin{eqnarray*}\label{}
%\ \ \ \ \ \ \ \ \ \ \ \ \begin{cases}
%\Delta^2u-\lambda\frac{u}{|x|^4}=|u|^{2^*-2}u+\mu u, &x\in\Omega,\\
%u=\dfrac{\partial u}{\partial n}=0, &x\in\partial\Omega,\\
%\end{cases}
%\end{eqnarray*}
%then
%problem \eqref{P1.1} has at least a nontrivial solution which is a mountain pass type.
%\end{corollary}

\par
\section{Proofs of the main results.}
\setcounter{equation}{0}

%We begin this section with a lemma which gives the Sobolev-Hardy
%inequality and a compactness result that \textcolor[rgb]{1.00,0.00,0.00}{ play} an important role in obtaining the $(PS)_c$ condition of functional $I$, the proof of which can be
%found, for example, in  \cite{4Hardylinj}.
%The Sobolev-Hardy inequality and a compactness result, both of which are necessary to get the $(PS)_c$ condition of functional $I$, are given in the first lemma of this section. The proof of this result may be found, for instance, in  \cite{4Hardylinj}.

This section starts off with a lemma that provides the Sobolev-Hardy inequality, %and a compactness result, both of which  are
which is crucial for getting the $(PS)_c$ condition of functional $I_\lambda$. The proof of this result may be found, for instance, in  \cite{4Hardylinj}.

\begin{lemma}\label{lem1}%$\mathrm{\bf{(Sobolev-Hardy\ inequality)}}$
Assume that $2\leq q\leq2^{**}(s)=\frac{2(N-s)}{N-4}$ $(0\leq s\leq4)$. Then

$(i)$ there exists a constant $C>0$ such that
  $C\big(\displaystyle{\int_\Omega\dfrac{|u|^q}{|x|^s}\mathrm{d}x}\big)^{\frac{1}{q}}\leq \| u\|$ for any $u\in H_0^2(\Omega)$;

$(ii)$ if $2\leq q<2^{**}(s)$,
the mapping $u\rightarrow\dfrac{u}{|x|^{\frac{s}{q}}}$ from $H_0^2(\Omega)$ into $L^q(\Omega)$ is compact.
\end{lemma}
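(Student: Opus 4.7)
The plan is to deduce (i) by interpolating between the classical Hardy--Rellich inequality $\int_\Omega u^2/|x|^4\,\mathrm{d}x\leq C\|u\|^2$ and the Sobolev embedding $\|u\|_{2^{**}}\leq C\|u\|$, and then to derive (ii) from (i) by a cut-off argument that isolates the singularity of $|x|^{-s}$ at the origin from the regular region.

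For part (i), I would first treat the critical exponent $q=2^{**}(s)$ with $0<s<4$. The key algebraic identity is
\begin{equation*}
\frac{|u|^{2^{**}(s)}}{|x|^s}=\left(\frac{|u|^2}{|x|^4}\right)^{s/4}|u|^{\,2^{**}(s)-s/2},
\end{equation*}
which, combined with H\"older's inequality applied with conjugate exponents $4/s$ and $4/(4-s)$, reduces the weighted critical integral to a product of the Hardy--Rellich and Sobolev integrals, provided one verifies the arithmetic identity $(2^{**}(s)-s/2)\cdot 4/(4-s)=2^{**}$, which indeed holds by direct computation. Bounding each factor then yields $\int_\Omega |u|^{2^{**}(s)}/|x|^s\,\mathrm{d}x\leq C\|u\|^{2^{**}(s)}$. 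The endpoints $s=0$ (pure Sobolev) and $s=4$, $q=2$ (pure Hardy--Rellich) are classical. For subcritical $q\in[2,2^{**}(s))$, a second H\"older application with exponents $2^{**}(s)/q$ and $2^{**}(s)/(2^{**}(s)-q)$ peels off the factor $\int_\Omega |x|^{-s}\,\mathrm{d}x$, which is finite because $s\leq 4<N$.

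For part (ii), let $u_n\rightharpoonup u$ in $H_0^2(\Omega)$ and split the integration domain as $\Omega=B_r(0)\cup(\Omega\setminus B_r(0))$ for small $r>0$. On the ball, H\"older together with part (i) applied to $u_n-u$ yields
\begin{equation*}
\int_{B_r(0)}\frac{|u_n-u|^q}{|x|^s}\,\mathrm{d}x\leq C\|u_n-u\|^q\left(\int_{B_r(0)}|x|^{-s}\,\mathrm{d}x\right)^{(2^{**}(s)-q)/2^{**}(s)},
\end{equation*}
which tends to $0$ as $r\to 0$ uniformly in $n$, since $\{u_n\}$ is bounded in $H_0^2(\Omega)$ and $\int_{B_r}|x|^{-s}\,\mathrm{d}x\to 0$. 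On the complement, the weight is bounded by $r^{-s}$, and the strict inequality $q<2^{**}(s)\leq 2^{**}$ permits invoking the Rellich--Kondrachov compact embedding $H_0^2(\Omega)\hookrightarrow L^q(\Omega\setminus B_r(0))$ to obtain strong $L^q$ convergence. A standard $\varepsilon$-argument (first choose $r$ small, then $n$ large) concludes.

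The main technical obstacle is the exponent bookkeeping in the critical interpolation step of (i); everything else---the subcritical reduction in (i) and the cut-off compactness in (ii)---follows essentially mechanically from that critical estimate once the interpolation exponents are correctly matched. Care must also be taken at the endpoint $s=4$, where the weight $|x|^{-4}$ is borderline integrable in dimension $N\geq 5$ and one must fall back on the Hardy--Rellich inequality directly rather than through the interpolation identity.
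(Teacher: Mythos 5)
The paper offers no proof of this lemma at all: it defers entirely to the reference \cite{4Hardylinj}, so there is no internal argument to compare yours against. Your blind proof is correct and follows the standard route for the Sobolev--Hardy (Rellich--Sobolev) inequality. The arithmetic in the critical interpolation step checks out: $2^{**}(s)-\frac{s}{2}=\frac{N(4-s)}{2(N-4)}$, hence $(2^{**}(s)-\frac{s}{2})\cdot\frac{4}{4-s}=2^{**}$, and the resulting power of $\|u\|$ is $\frac{s}{2}+\frac{N(4-s)}{2(N-4)}=2^{**}(s)$, as required; the subcritical case then follows by the second H\"older application since $\int_\Omega|x|^{-s}\,\mathrm{d}x<\infty$ for $s\leq4<N$. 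The compactness argument in (ii) is also sound: the near-origin contribution is small uniformly in $n$ because the critical bound controls $\int_{B_r}|u_n-u|^{2^{**}(s)}|x|^{-s}\,\mathrm{d}x$ by the (bounded) $H_0^2$ norms while $\int_{B_r}|x|^{-s}\,\mathrm{d}x\to0$, and away from the origin the weight is bounded and Rellich--Kondrachov applies since $q<2^{**}$. Note that the strict inequality $q<2^{**}(s)$ enters exactly where it must, to make the exponent $(2^{**}(s)-q)/2^{**}(s)$ positive in the near-origin estimate; this is consistent with the paper's observation that compactness fails at $s=4$, $q=2$.
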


\begin{remark}\label{remark1}
Denote the best Sobolev-Hardy constant
\allowdisplaybreaks  \begin{align}%\label{}\nonumber   \end{align*}\begin{equation}\label{Vv}
\label{Vv}\lambda_{s,q}=\inf_{u\in H_0^2(\Omega)\backslash\{0\}}\frac{\| u\|^2}{(\int_\Omega\frac{|u|^q}{|x|^s}\mathrm{d}x)^{\frac{2}{q}}}.
\end{align}
In particular,  $\lambda_{4,2}=\dfrac{1}{16}N^2(N-4)^2$. We always write $\lambda_{0,2^{**}}$ as $S$ for simplicity,
which satisfies $\|u\|_{2^{**}}^{2^{**}}\leq S^{-\frac{{2^{**}}}{2}}\|u\|^{2^{**}}$.
%When $s=0$ and $2\leq q\leq\frac{2N}{N-4}$, let $S_q$ be $\lambda_{0,q}$.
%, it holds %that, $\|u\|_q\leq S_q^{-\frac{1}{2}}\|u\|$, for any $u\in H_{0}^{2}(\Omega)$.
%Specially, we set $S:=S_{2^*}$.
\end{remark}

In order to prove Theorem \ref{th}, the following result, which follows immediately from \eqref{Vv} with $q=2$, is needed.

\begin{lemma}\label{w}
For $\lambda\in (0,\lambda_{s, 2})$ $(0\leq s\leq4)$, there exists a $\mu_s>0$ such that
\begin{equation}\label{Hhh}
\displaystyle{\int_{\Omega}(|\Delta u|^2-\lambda\dfrac{|u|^2}{|x|^s})\mathrm{d}x}\geq\mu_s\| u\|^2,
\end{equation}
for any $u\in H_0^2(\Omega)$.
\end{lemma}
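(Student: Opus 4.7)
The plan is to derive this immediately from the variational characterization of the best Sobolev--Hardy constant $\lambda_{s,2}$ given in \eqref{Vv}. First, I would observe that taking $q=2$ in the definition of $\lambda_{s,q}$ yields
\begin{equation*}
\lambda_{s,2} \int_{\Omega} \frac{|u|^{2}}{|x|^{s}}\,\mathrm{d}x \;\leq\; \|u\|^{2} \qquad \text{for every } u \in H_{0}^{2}(\Omega),
\end{equation*}
which is just a rearrangement of the infimum inequality.

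Next, since $\|u\|^{2} = \int_{\Omega} |\Delta u|^{2}\,\mathrm{d}x$ by definition of the norm on $H_{0}^{2}(\Omega)$, I would subtract $\lambda \int_{\Omega} |u|^{2}/|x|^{s}\,\mathrm{d}x$ from both sides of the trivial identity $\|u\|^{2} = \|u\|^{2}$ and use the above bound to get
\begin{equation*}
\int_{\Omega} \Big(|\Delta u|^{2} - \lambda \frac{|u|^{2}}{|x|^{s}}\Big)\,\mathrm{d}x \;\geq\; \|u\|^{2} - \frac{\lambda}{\lambda_{s,2}} \|u\|^{2} \;=\; \Big(1 - \frac{\lambda}{\lambda_{s,2}}\Big) \|u\|^{2}.
\end{equation*}

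Finally, I would set $\mu_{s} := 1 - \lambda/\lambda_{s,2}$, which is strictly positive precisely because $\lambda \in (0, \lambda_{s,2})$, giving the claim. There is essentially no obstacle here: the lemma is a direct consequence of the definition of $\lambda_{s,2}$ in Remark \ref{remark1}, and the existence of $\lambda_{s,2} > 0$ (in particular, its positivity for $s = 4$, where $\lambda_{4,2} = \tfrac{1}{16}N^{2}(N-4)^{2}$) has already been asserted in the preceding lemma and remark. The only subtlety worth noting is that the inequality in Lemma \ref{lem1}(i), valid for the range $2 \leq q \leq 2^{**}(s)$, is exactly what guarantees $\lambda_{s,2} > 0$ so that the constant $\mu_{s}$ is well-defined and positive.
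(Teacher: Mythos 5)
Your proof is correct and is essentially the paper's own argument: the paper states that Lemma \ref{w} ``follows immediately from \eqref{Vv} with $q=2$,'' and your computation, yielding $\mu_s = 1-\lambda/\lambda_{s,2}>0$, is exactly the intended one. Your remark that Lemma \ref{lem1}(i) (with $q=2\leq 2^{**}(s)$ for $0\leq s\leq 4$) is what guarantees $\lambda_{s,2}>0$ is also consistent with the paper.
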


\begin{remark}\label{SS}
From Lemma \ref{w} it follows that the following %Rellich-Sobolev constant
best Sobolev constant
is well defined  for $\lambda\in(0,\lambda_{4,2})$
\begin{equation}\label{Hh}
S_\lambda=\inf_{u\in H_0^2(\Omega)\backslash\{0\}}\frac{\int_{\Omega}(|\Delta u|^2-\lambda\frac{|u|^2}{|x|^4})\mathrm{d}x}{\big(\int_{\Omega}| u|^{2^{**}}\mathrm{d}x\big)^{\frac{2}{2^{**}}}}>0.
\end{equation}
\end{remark}

\subsection{The case $0\leq s<4$.}

In general, the functional $I_\lambda(u)$  does not satisfy the $(PS)_c$ condition for all $c\in \mathbb{R}$,
due to the appearance of the critical term. However, with the help of Br\'{e}zis-Lieb's lemma,
we can find a constant $c(S)$ such that the $(PS)_c$ condition holds for all $c<c(S)$.
This will be essential in revealing the main results.

\begin{lemma}\label{lem3}
Assume that $f(x,t)$ satisfies $(f_1)$ and $(f_2)$, $0\leq s<4$.
Let $\{u_{n}\}\subset H_0^2(\Omega)$ be a sequence such that $I_\lambda(u_{n})\rightarrow c<c(S)$ and $I_\lambda'(u_{n})\rightarrow 0$ in $H^{-2}(\Omega)$ as $n\rightarrow\infty$, where $c(S):=\dfrac{2}{N}S^{\frac{N}{4}}$. Then, $I_\lambda(u)$  satisfies the $(PS)_c$ condition if $q=2$ and $0<\lambda<\lambda_{s,2}$, or $2< q<2^{**}(s)$ and $\lambda>0$.
\end{lemma}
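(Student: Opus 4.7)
The plan is to follow the standard recipe for critical-exponent problems: show the $(PS)_c$ sequence $\{u_n\}$ is bounded in $H_0^2(\Omega)$, extract a weak limit $u$ which is automatically a critical point of $I_\lambda$, apply Br\'ezis--Lieb to the critical term, and then rule out the "concentrated" alternative using the threshold $c(S)$. For boundedness I would split into the two ranges of $q$. When $q=2$, the combination
$$I_\lambda(u_n)-\tfrac{1}{\rho}\langle I_\lambda'(u_n),u_n\rangle=\bigl(\tfrac{1}{2}-\tfrac{1}{\rho}\bigr)\|u_n\|^2-\lambda\bigl(\tfrac{1}{2}-\tfrac{1}{\rho}\bigr)\int_\Omega\tfrac{|u_n|^2}{|x|^s}\,\mathrm{d}x+\bigl(\tfrac{1}{\rho}-\tfrac{1}{2^{**}}\bigr)\|u_n\|_{2^{**}}^{2^{**}}+\int_\Omega\bigl(\tfrac{1}{\rho}u_nf-F\bigr)\,\mathrm{d}x,$$
combined with $\int_\Omega|u_n|^2/|x|^s\,\mathrm{d}x\leq\lambda_{s,2}^{-1}\|u_n\|^2$ from \eqref{Vv}, $\lambda<\lambda_{s,2}$, and $(f_2)$, yields a strictly positive multiple of $\|u_n\|^2$ on the right. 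When $2<q<2^{**}(s)$ this coefficient has the wrong sign, so instead I would form $I_\lambda(u_n)-\tfrac{1}{q}\langle I_\lambda'(u_n),u_n\rangle$, which cancels the Hardy term outright and leaves $\bigl(\tfrac{1}{2}-\tfrac{1}{q}\bigr)\|u_n\|^2+\bigl(\tfrac{1}{q}-\tfrac{1}{2^{**}}\bigr)\|u_n\|_{2^{**}}^{2^{**}}+\int_\Omega(\tfrac{1}{q}u_nf-F)\,\mathrm{d}x$, all three pieces being non-negative since $q\in(2,2^{**})$ and $(f_2)$ gives $\rho>q$; no smallness on $\lambda$ is needed.

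Having $\{u_n\}$ bounded, I would extract a subsequence with $u_n\rightharpoonup u$ in $H_0^2(\Omega)$, $u_n\to u$ a.e.\ and in $L^p(\Omega)$ for each $p<2^{**}$, and $u_n\to u$ in $L^q(\Omega,|x|^{-s})$ by Lemma~\ref{lem1}$(ii)$. Using $(f_1)$ (truncating at $\{|u_n|\leq M\}$ and controlling the complement uniformly through $\|u_n\|_{2^{**}}\leq C$), one passes to the limit in $\langle I_\lambda'(u_n),\varphi\rangle$ to conclude $I_\lambda'(u)=0$. Writing $v_n:=u_n-u$ and combining weak convergence in $H_0^2(\Omega)$ with Lemma~\ref{Lieb} applied to the critical term gives
$$I_\lambda(u_n)=I_\lambda(u)+\tfrac{1}{2}\|v_n\|^2-\tfrac{1}{2^{**}}\|v_n\|_{2^{**}}^{2^{**}}+o(1),\qquad \|v_n\|^2-\|v_n\|_{2^{**}}^{2^{**}}=o(1),$$
the second identity coming from $\langle I_\lambda'(u_n),u_n\rangle\to 0$ and $\langle I_\lambda'(u),u\rangle=0$ after the Hardy and subcritical pieces are absorbed into $o(1)$.

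Setting $b:=\lim\|v_n\|^2=\lim\|v_n\|_{2^{**}}^{2^{**}}$, the Sobolev inequality $\|v_n\|_{2^{**}}^{2^{**}}\leq S^{-2^{**}/2}\|v_n\|^{2^{**}}$ forces the dichotomy $b=0$ or $b\geq S^{N/4}$. In the second alternative, the first identity above gives $I_\lambda(u)\leq c-\tfrac{2}{N}S^{N/4}=c-c(S)<0$, which I would contradict by showing $I_\lambda(u)\geq 0$. For $q=2$, using $\langle I_\lambda'(u),u\rangle=0$ to eliminate $\|u\|^2$ yields $I_\lambda(u)=\tfrac{2}{N}\|u\|_{2^{**}}^{2^{**}}+\int_\Omega(\tfrac{1}{2}uf-F)\,\mathrm{d}x\geq 0$, since $\rho>2$ in $(f_2)$ makes the integrand non-negative. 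For $2<q<2^{**}(s)$, I would reuse the combination $I_\lambda(u)-\tfrac{1}{q}\langle I_\lambda'(u),u\rangle$ from the boundedness step, which again cancels the Hardy integral and leaves three manifestly non-negative pieces. Hence $b=0$ and $u_n\to u$ strongly in $H_0^2(\Omega)$.

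The step I expect to be the main obstacle is the sign control $I_\lambda(u)\geq 0$ for every critical point, precisely because the Hardy integral has the wrong sign in $I_\lambda$: a naive substitution yields $\lambda(\tfrac{1}{2}-\tfrac{1}{q})\int|u|^q/|x|^s\,\mathrm{d}x$, which is negative as soon as $q>2$. The trick is to pick the particular combination $I_\lambda-\theta\langle I_\lambda',\cdot\rangle$ with $\theta=1/q$, since $\theta=1/q$ is the unique value that cancels the Hardy integral and simultaneously keeps $\tfrac{1}{2}-\theta>0$ and $\theta-\tfrac{1}{2^{**}}>0$. This is also what avoids the smallness restriction on $\lambda$ in the superlinear-$q$ range and mirrors exactly the boundedness argument. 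A secondary technical point is the passage to the limit in the lower-order integrals $\int F(x,u_n)$ and $\int u_nf(x,u_n)$: since \eqref{Mm}--\eqref{Mmm} are only of critical order, dominated convergence does not apply directly and one must exploit the genuine subcriticality in $(f_1)$ via a Vitali-type argument using the uniform $L^{2^{**}}$ bound.
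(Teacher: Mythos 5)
Your proposal is correct and follows essentially the same route as the paper: the same two test combinations ($\theta=1/\rho$ with the Hardy--Sobolev coercivity when $q=2$, $\theta=1/q$ to cancel the Hardy term when $q>2$) for boundedness, Vitali for the lower-order integrals, Br\'ezis--Lieb for the critical term, the dichotomy $b=0$ or $b\geq S^{N/4}$, and the contradiction via $I_\lambda(u)-\tfrac{1}{q}\langle I_\lambda'(u),u\rangle\geq 0$, which is exactly the paper's \eqref{Mk}. No gaps.
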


\begin{proof}
We begin the proof with showing the boundedness of $\{u_n\}$ in $H_0^2(\Omega)$.
Depending on whether or not $q$ is equal to $2$, the proof will be divided into two cases.
When $q>2$,  by virtue of  $(f_2)$, we obtain, for any $\lambda>0$, that
\allowdisplaybreaks  \begin{align*}%\label{}\nonumber   \end{align*}\begin{eqnarray*}
%\begin{split}
c+1+o(1)\|u_n\|
&\geq I_\lambda(u_n)-\dfrac{1}{q}\langle I_\lambda'(u_n),u_n \rangle\\
&=\Big(\frac{1}{2}-\frac{1}{q}\Big)\| u_n\|^2
+\Big(\frac{1}{q}-\frac{1}{2^{**}}\Big)\|u_n\|_{2^{**}}^{2^{**}}
+\int_{\Omega}\Big(\frac{1}{q}f(x,u_n)u_n-F(x,u_n)\Big)\mathrm{d}x\\
%&\geq\Big(\frac{1}{2}-\frac{1}{\theta}\Big)\int_{\Omega}(|\Delta u_n|^2-\lambda\frac{u_n^2}{|x|^s})\mathrm{d}x\\
&\geq\Big(\frac{1}{2}-\frac{1}{q}\Big)\| u_n\|^2,\qquad n\rightarrow\infty.
\end{align*}
When $q=2$, in accordance with $(f_2)$ and \eqref{Hhh}, we get, for $\lambda\in (0,\lambda_{s,2})$, that
\allowdisplaybreaks  \begin{align}%\label{}\nonumber   \end{align*}\begin{eqnarray}\label{Y3}
%\begin{split}
c+1+o(1)\|u_n\|
&\geq I_\lambda(u_n)-\frac{1}{\rho}\langle I_\lambda'(u_n),u_n \rangle\nonumber\\
&=\Big(\frac{1}{2}-\frac{1}{\rho}\Big)\int_{\Omega}(|\Delta u_n|^2-\lambda\frac{|u_n|^2}{|x|^s})\mathrm{d}x
+\Big(\frac{1}{\rho}-\frac{1}{2^{**}}\Big)\|u_n\|_{2^{**}}^{2^{**}}\nonumber\\
\label{Y3}& \ \ \ +\int_{\Omega}\Big(\frac{1}{\rho}f(x,u_n)u_n-F(x,u_n)\Big)\mathrm{d}x\\
&\geq\Big(\frac{1}{2}-\frac{1}{\rho}\Big)\int_{\Omega}(|\Delta u_n|^2-\lambda\frac{|u_n|^2}{|x|^s})\mathrm{d}x\nonumber\\
&\geq\Big(\frac{1}{2}-\frac{1}{\rho}\Big)\mu_s \|u_n\|^2,\qquad n\rightarrow\infty.\nonumber
%\end{split}
%\end{eqnarray}
\end{align}
It is obvious that in either case $\{u_n\}$ is a bounded sequence in $H_0^2(\Omega)$.
Consequently, by recalling Lemma \ref{lem1} one sees that there is a subsequence of $\{u_{n}\}$
(which we still denote by $\{u_{n}\}$) such that,  as $n\rightarrow\infty$,
\begin{eqnarray}\label{A}
\ \ \ \ \ \ \ \ \ \ \ \ \begin{cases}
u_{n}\rightharpoonup u \ in \ H_{0}^2(\Omega),\\
u_{n}\rightarrow u \ in \ L^r(\Omega)\ (1\leq r<2^{**}),\\
u_{n}\rightarrow u \ in \ H_0^1(\Omega),\\
%\frac{u_n}{|x|^{\frac{s}{q}}}\rightarrow\frac{u}{|x|^{\frac{s}{q}}}\ in \ L^q(\Omega)(1\leq q<2^*(s)) \ as\ n\rightarrow\infty,\\
%\frac{u_n}{|x|^{\frac{s}{q}}}\rightharpoonup\frac{u}{|x|^{\frac{s}{q}}}\ in \ L^{2^*(s)}(\Omega) \ as\ n\rightarrow\infty,\\
u_{n}\rightarrow u \ in \ L^{q}(\Omega, |x|^{-s})\ (1\leq q<2^{**}(s)),\\
%|u_n|^{2^*(s)-2}u_n\rightharpoonup |u|^{2^*(s)-2}u\ in\ L^{\frac{2^*(s)}{2^*(s)-1}}(\Omega, |x|^{-s}),\\
%\frac{|u_n|^{q-2}u_n}{|x|^{\frac{s}{q}}}\rightharpoonup \frac{|u|^{q-2}u}{|x|^{s}} \ in \ L^q(\Omega)(1\leq q<2^*(s))\ as\ n\rightarrow\infty,\\
|u_{n}|^{2^{**}-2}u_n\rightharpoonup |u|^{2^{**}-2}u \ in \ L^{\frac{2^{**}}{2^{**}-1}}(\Omega),\\
%u_{n}\rightharpoonup u \ in \ L^{2^*}(\Omega) \ as\ n\rightarrow\infty,\\
u_{n}\rightarrow u \ \ a.e.\ in \ \Omega.
\end{cases}
\end{eqnarray}
%Moreover, using the similar proof of $(2.12)$ in \cite{G.  Li2},
% and  Proposition 2.4 in \cite,
%we can obtain that, as $n\rightarrow\infty$,
%\begin{eqnarray}\label{Pp}
%\nabla u_n\rightarrow \nabla u, \ a.e.\ in \ \Omega.
%\end{eqnarray}
%We claim that $\lim_{n\rightarrow\infty}\int_\Omega f(x,u_n)u_n\mathrm{d}x=\int_\Omega %f(x,u)u\mathrm{d}x$.
%For any $\varepsilon>0$, according to \eqref{Mm}, there exists a $C_\varepsilon>0$ such that $|f(x,t)|\leq\varepsilon t^{2^*-1}+C_\varepsilon t$ for $t>0$.
In view of \eqref{Mm} (with $\varepsilon=1$), there exists a positive constant $C$, independent of $n$, such that
\begin{eqnarray*}
\begin{split}
|\int_\Omega f(x,u_n)u_n\mathrm{d}x|
&\leq\int_\Omega|f(x,u_n)||u_n|\mathrm{d}x\\
&\leq\|u_n\|_{2^{**}}^{2^{**}}+C_1(1)\|u_n\|_2^2\\
&\leq C.
\end{split}
\end{eqnarray*}
For any $\varepsilon>0$, take $\delta=\dfrac{\varepsilon}{C_1(\varepsilon)^{\frac{2^{**}}{2^{**}-2}}}$,
where $C_1(\varepsilon)>0$ is given in \eqref{Mm}.
Then for any measurable subset $E\subset\Omega$ with $mes\ E<\delta$, we obtain,
by recalling \eqref{Mm} again and applying H\"{o}lder's inequality, that
\allowdisplaybreaks
\begin{align*}%\label{}\nonumber  \end{align}\begin{eqnarray*}
%\begin{split}
|\int_Ef(x,u_n)u_n\mathrm{d}x|
&\leq\varepsilon\int_E|u_n|^{2^{**}}\mathrm{d}x+C_1(\varepsilon)\int_E|u_n|^2\mathrm{d}x\\
&\leq\varepsilon \|u_n\|_{2^{**}}^{2^{**}}+C_1(\varepsilon)\|u_n\|_{2^{**}}^{2} (mesE)^{\frac{2^{**}-2}{2^{**}}}\\
&\leq C_1\varepsilon+C_2\varepsilon^{\frac{2^{**}-2}{2^{**}}},
%\end{split}
%\end{eqnarray*}
\end{align*}
 uniformly  with respect to  $n\in \mathbb{N}$, where $C_1$, $C_2$ are positive constants  independent of $n$.
 Hence the family of functions $\{f(x,u_n)u_n\}$  is equi-absolutely-continuous.
%where $\|u_n\|_{2^*}^{2^*}\leq C_1$, $C_1>0$ is a constant.
%\textcolor[rgb]{1.00,0.00,0.00}{Hence the family of functions $\{f(x,u_n)u_n\}$  is equi-absolutely-continuous.}
In addition, recalling that $u_n\rightarrow u$ a.e. in $\Omega$ as $n\rightarrow\infty$ and $f(x,t)$ is continuous,
we get $f(x,u_n)u_n\rightarrow f(x,u)u$ a.e. in $\Omega$ as $n\rightarrow\infty$.  This, together with the fact that  $mes\ \Omega<\infty$, implies that $f(x,u_n)u_n\rightarrow f(x,u)u$  in measure.
Therefore, by virtue of Vitali convergence theorem, we get
\begin{eqnarray}\label{C}
\lim_{n\rightarrow\infty}\int_\Omega f(x,u_n)u_n\mathrm{d}x=\int_\Omega f(x,u)u\mathrm{d}x.
\end{eqnarray}
Similarly, we can prove that
\begin{eqnarray}\label{D}
\lim_{n\rightarrow\infty}\int_\Omega F(x,u_n)\mathrm{d}x=\int_\Omega F(x,u)\mathrm{d}x.
\end{eqnarray}
 By recalling \eqref{Mm} (with $\varepsilon=1$) and \eqref{A},  %for given $\varepsilon>0$,
  we have
\begin{eqnarray*}
\begin{split}
|\int_\Omega f(x,u_n)u\mathrm{d}x|
&\leq\int_\Omega|f(x,u_n)||u|\mathrm{d}x\\
&\leq%\varepsilon
\int_\Omega|u_n|^{2^{**}-1}|u|\mathrm{d}x+C_1(1)\int_\Omega|u_n||u|\mathrm{d}x\\
&\rightarrow\|u\|_{2^{**}}^{2^{**}}+C_1(1)\|u\|_2^2, \qquad  n\rightarrow\infty,
\end{split}
\end{eqnarray*}
 which, together with the fact that $f(x,t)$ is a continuous function and $u_n\rightarrow u$ a.e. in $\Omega$ as $n\rightarrow\infty$, shows that $f(x,u_n)u\rightarrow f(x,u)u$ a.e. in $\Omega$ as $n\rightarrow\infty$.
Then, according to Lebesgue's dominated convergence theorem, we know
\begin{eqnarray}\label{E}
\lim_{n\rightarrow\infty}\int_\Omega f(x,u_n)u\mathrm{d}x=\int_\Omega f(x,u)u\mathrm{d}x.
\end{eqnarray}

To complete the proof, let $w_n=u_n-u$. Then $\{w_n\}$ is also a bounded sequence in $H_0^2(\Omega)$.
So there exists a subsequence of $\{w_{n}\}$ (which we still denoted by $\{w_{n}\}$) such that
\begin{equation}\label{limit-l}
\lim\limits_{n\rightarrow\infty}\|w_n\|^2=l\geq0.
\end{equation}
We claim that $l=0$.
Indeed, according to \eqref{A}, we have
\begin{eqnarray}\label{Zz}
\begin{split}
\|u_n\|^{2}&=\int_\Omega |\Delta w_n+\Delta u|^2\mathrm{d}x\\
&=\int_\Omega |\Delta w_n|^2\mathrm{d}x+\int_\Omega |\Delta u|^2\mathrm{d}x+2\int_\Omega \Delta w_n\Delta u\mathrm{d}x\\
&=\| w_n\|^2+\| u\|^{2}+o(1), \qquad  n\rightarrow\infty.
\end{split}
\end{eqnarray}
Moreover, since $\| u_n\|_{2^{**}}\leq C$ and $u_{n}\rightarrow u$  a.e. in $\Omega$ as $n\rightarrow\infty$,
one sees, by using Br\'{e}zis-Lieb's lemma, that
\begin{equation}\label{B}
\|u_n\|_{2^{**}}^{2^{**}}=\|w_n\|_{2^{**}}^{2^{**}}+\|u\|_{2^{**}}^{2^{**}}+o(1),\qquad n\rightarrow\infty.
\end{equation}
Therefore, in accordance with \eqref{A}, \eqref{C}, \eqref{E},  \eqref {Zz},  \eqref{B}  and the assumption that $I_\lambda'(u_{n})\rightarrow 0$ in $H^{-2}(\Omega)$ as $n\rightarrow\infty$, we have
\begin{eqnarray*}\label{}
\begin{split}
o(1)=&\langle I_\lambda'(u_n),u_n\rangle\\
=&\int_{\Omega}(|\Delta u_n|^2-\lambda\dfrac{|u_n|^q}{|x|^s})\mathrm{d}x
-\|u_n\|_{2^{**}}^{2^{**}}-\int_{\Omega}f(x,u_n)u_n\mathrm{d}x\\
=&\int_{\Omega}(|\Delta u|^2-\lambda\frac{|u|^q}{|x|^s})\mathrm{d}x
-\|u\|_{2^{**}}^{2^{**}}-\int_{\Omega}f(x,u)u\mathrm{d}x+\| w_n\|^2-\|w_n\|_{2^{**}}^{2^{**}}+o(1)\\
=&\langle I_\lambda'(u),u\rangle+\|w_n\|^2-\|w_n\|_{2^{**}}^{2^{**}}+o(1),\quad n\rightarrow\infty,
\end{split}
\end{eqnarray*}
and
\begin{eqnarray*}\label{}
\begin{split}
o(1)&=\langle I_\lambda'(u_n),u\rangle\\
&=\int_{\Omega}(\Delta u_n\Delta u-\lambda\frac{|u_n|^{q-2}u_nu}{|x|^s})\mathrm{d}x
-\int_{\Omega}|u_n|^{2^{**}-2}u_nu\mathrm{d}x-\int_{\Omega}f(x,u_n)u\mathrm{d}x\\
&=\int_{\Omega}(|\Delta u|^2-\lambda\frac{|u|^q}{|x|^s})\mathrm{d}x
-\|u\|_{2^{**}}^{2^{**}}-\int_{\Omega}f(x,u)u\mathrm{d}x+o(1)\\
&=\langle I_\lambda'(u),u\rangle+o(1), \quad n\rightarrow\infty.
\end{split}
\end{eqnarray*}
Combining the above two equalities one obtains
\begin{equation}\label{Y4}
\langle I_\lambda'(u),u\rangle=0,
\end{equation}
and
\begin{equation}\label{32}
\|w_n\|_{2^{**}}^{2^{**}}-\|w_n\|^2=o(1),\qquad  n\rightarrow\infty.
\end{equation}
 In addition, from the Sobolev embedding one has
 \begin{equation}\label{Vc}
  \|w_n\|_{2^{**}}^{2^{**}}\leq S^{-\frac{2^{**}}{2}}\| w_n\|^{2^{**}}<C,\qquad \forall\ n\in \mathbb{N}.
\end{equation}
It follows from \eqref{limit-l}, \eqref{32} and \eqref{Vc} that there is a subsequence of $\{w_{n}\}$ such that
\begin{equation}\label{limit-l2}
\lim\limits_{n\rightarrow\infty}\|w_n\|_{2^{**}}^{2^{**}}=\lim\limits_{n\rightarrow\infty}\|w_n\|^2=l.
\end{equation}
Letting $n\rightarrow\infty$ in \eqref{Vc}, we have $l\leq S^{-\frac{2^{**}}{2}}l^{\frac{2^{**}}{2}}$.
If $l>0$, then
\begin{equation}\label{L1}
l\geq S^{\frac{2^{**}}{2^{**}-2}}=S^{\frac{N}{4}}.
\end{equation}

On one hand, in view of \eqref{A}, \eqref{D}, \eqref {Zz}, \eqref{B} and the fact that $I_\lambda(u_n)=c+o(1)$ as $n\rightarrow\infty$,
we have
\begin{eqnarray*}
\begin{split}
o(1)+c=I_\lambda(u_n)
&=\frac{1}{2}\| u_n\|^2-\frac{\lambda}{q}\int_{\Omega}\frac{|u_n|^q}{|x|^s}\mathrm{d}x
-\frac{1}{2^{**}}\|u_n\|_{2^{**}}^{2^{**}}-\int_{\Omega}F(x,u_n)\mathrm{d}x\\
&=\frac{1}{2}\| u\|^2-\frac{\lambda}{q}\int_{\Omega}\frac{|u|^q}{|x|^s}\mathrm{d}x
-\frac{1}{2^{**}}\|u\|_{2^{**}}^{2^{**}}-\int_{\Omega}F(x,u)\mathrm{d}x\\
&\ \ \ +\frac{1}{2}\| w_n\|^2-\frac{1}{2^{**}}\|w_n\|_{2^{**}}^{2^{**}}+o(1)\\
&=I_\lambda(u)+\frac{1}{2}\| w_n\|^2-\frac{1}{2^{**}}\|w_n\|_{2^{**}}^{2^{**}}+o(1),\qquad  n\rightarrow\infty,
\end{split}
\end{eqnarray*}
which  yields that
$$I_\lambda(u)=c-\dfrac{1}{2}\| w_n\|^2+\dfrac{1}{2^{**}}\|w_n\|_{2^{**}}^{2^{**}}+o(1), \qquad n\rightarrow\infty.$$
Recalling \eqref{limit-l2} and \eqref{L1}, we get from the above equality that
\begin{eqnarray*}
\begin{split}
I_\lambda(u)=c-\Big(\frac{1}{2}-\frac{1}{2^{**}}\Big)l\leq c-\frac{2}{N}S^{\frac{N}{4}}<0.
\end{split}
\end{eqnarray*}

On the other hand, by \eqref{Y4} and $(f_2)$, we can derive
\allowdisplaybreaks  \begin{align}%\label{}\nonumber   \end{align*}\begin{eqnarray}\label{Mk}
%\begin{split}
I_\lambda(u)
&=I_\lambda(u)-\frac{1}{q}\langle I_\lambda'(u),u\rangle\nonumber\\
\label{Mk}&=\Big(\frac{1}{2}-\frac{1}{q}\Big)\| u\|^2+\Big(\frac{1}{q}-\frac{1}{2^{**}}\Big)\|u_n\|_{2^{**}}^{2^{**}}
+\int_{\Omega}\Big(\frac{1}{q}f(x,u)u-F(x,u)\Big)\mathrm{d}x\\
&\geq0,\nonumber
%\end{split}
%\end{eqnarray}
\end{align}
a contradiction. Thus, $\lim\limits_{n\rightarrow\infty}\|w_n\|^2=l=0$, which implies that $u_n\rightarrow u$ in $H_0^2(\Omega)$ as $n\rightarrow\infty$. The proof is complete.
\end{proof}

Before going further, we list some well-known estimates on the Talenti functions,
which will play a crucial role in estimating the mountain pass level of $I_\lambda$ around $0$.
For any $\varepsilon>0$, define
\begin{eqnarray*}
U_\varepsilon(x)=[N(N-4)(N^2-4)]^{\frac{N-4}{8}}\dfrac{\varepsilon^{\frac{N-4}{2}}}
{[\varepsilon^2+|x|^2]^{\frac{N-4}{2}}}, \ x \in \mathbb{R}^N. %and \ \ \
%U_{\varepsilon,a}(x):=U_{\varepsilon}(x-a)
\end{eqnarray*}
Then $U_\varepsilon(x)$ is a solution of the critical problem
\begin{eqnarray*}
\Delta^2u=u^{2^{**}-1}, &x\in \mathbb{R}^N,\ \ \ N\geq5,
\end{eqnarray*}
and $\|U_\varepsilon\|^2=\|U_\varepsilon\|_{2^{**}}^{2^{**}}=S^{\frac{N}{4}}$,
where $S=\inf\limits_{u\in H_0^2(\Omega)\backslash\{0\}}\dfrac{\|u\|^2}{\|u\|_{2^{**}}^{2}}
=\dfrac{\|U_\varepsilon\|^2}{\|U_\varepsilon\|_{2^{**}}^{2}}$ is given in Remark \ref{remark1}.

%In what follows we give some estimates of Talenti functions, which plays an important role in investigating the mountain pass level of $I$. Since the proof (see \cite{li} and \cite{4Hardylinj}) is more or less standard, we only sketch its outline.

The Talenti functions, after being truncated, are estimated in the following
%\textcolor[rgb]{1.00,0.00,0.00}{We} simply provide a general description of the conclusion because its proof is quite standard
(see \cite{li}, \cite{B. J. Xuan} and  \cite{4Hardylinj}).
\begin{lemma}\label{Y1}
Let $\tau\in C_0^\infty(\Omega)$ be a cut-off function such that $\tau(x)=\tau(|x|)$, $0\leq\tau(x)\leq1$ for $x\in \Omega$, and
\begin{eqnarray*}
\tau(x)=
\begin{cases}
1,&|x|<R,\\
%0\leq\tau(x)\leq1,&R\leq|x|\leq 2R,\\
0,&|x|> 2R,
\end{cases}
\end{eqnarray*}
where $R>0$ is a constant such that $B_{2R}(0)\subset\Omega$.
Set $u_\varepsilon(x)=\tau(x)U_\varepsilon(x)$. Suppose that $\varepsilon\rightarrow0$. Then
\begin{eqnarray*}\label{}
\begin{split}
&\|u_\varepsilon\|^2=S^{\frac{N}{4}}+O(\varepsilon^{N-4}),\\
&\|u_\varepsilon\|_{2^{**}}^{2^{**}}=S^{\frac{N}{4}}+O(\varepsilon^{N}).\\
\end{split}
\end{eqnarray*}
%\begin{eqnarray*}
%&\textcolor[rgb]{1.00,0.00,0.00}{\|u_\varepsilon\|_{\rho}^{\rho}}=
%\begin{cases}
%k'\varepsilon^{\frac{(N-4)\rho}{2}}+\textcolor[rgb]{1.00,0.00,0.00}{O(\varepsilon^{\frac{(N-4)\rho}{2}+1})},&\textcolor[rgb]{1.00,0.00,0.00}{1<\rho<\frac{N}{N-4}},\\
%k'\varepsilon^{N-\frac{(N-4)\rho}{2}}|\ln\varepsilon|+O(\varepsilon^{\frac{(N-4)\rho}{2}}),&\rho=\frac{N}{N-4},\\
%k'\varepsilon^{N-\frac{(N-4)\rho}{2}}+O(\varepsilon^{\frac{(N-4)\rho}{2}}),&\frac{N}{N-4}<\rho<2^*,\\
%\end{cases}
%\end{eqnarray*}
%\begin{eqnarray*}
%\textcolor[rgb]{1.00,0.00,0.00}{\int_{\Omega}\frac{|u_\varepsilon|^q}{|x|^s}\mathrm{d}x}=
%\begin{cases}
%k\varepsilon^{\frac{(N-4)q}{2}}+\textcolor[rgb]{1.00,0.00,0.00}{O(\varepsilon^{\frac{(N-4)q}{2}+1})}, & \textcolor[rgb]{1.00,0.00,0.00}{q<\frac{N-s}{N-4}},\\
%k\varepsilon^{N-\frac{(N-4)q}{2}-s}|\ln\varepsilon|
%+O(\varepsilon^{\frac{(N-4)q}{2}}), & q=\frac{N-s}{N-4},\\
%k\varepsilon^{N-\frac{(N-4)q}{2}-s}
%+O(\varepsilon^{\frac{(N-4)q}{2}}), & q>\frac{N-s}{N-4},\\
%\end{cases}
%\end{eqnarray*}
%for some some positive $k'$.
Set $v_\varepsilon(x)=\dfrac{u_\varepsilon}{\|u_\varepsilon\|_{2^{**}}}$. Then
\begin{eqnarray}\label{u}
\begin{split}
&\|v_\varepsilon\|^2=S+O(\varepsilon^{N-4}),\\
&\|v_\varepsilon\|_{2^{**}}^{2^{**}}=1,\\
\end{split}
\end{eqnarray}
\begin{eqnarray}\label{Nn}
\|v_\varepsilon\|_{\rho}^{\rho}=
\begin{cases}
O_1(\varepsilon^{\frac{(N-4)\rho}{2}}),&1<\rho<\frac{N}{N-4},\\
O_1(\varepsilon^{N-\frac{(N-4)\rho}{2}}|\ln\varepsilon|),&\rho=\frac{N}{N-4},\\
O_1(\varepsilon^{N-\frac{(N-4)\rho}{2}}),&\frac{N}{N-4}<\rho<2^{**},\\
\end{cases}
\end{eqnarray}
and
\begin{eqnarray}\label{Nnn}
\int_{\Omega}\dfrac{|v_\varepsilon|^q}{|x|^s}\mathrm{d}x=
\begin{cases}
O_1(\varepsilon^{\frac{(N-4)q}{2}}), & \ 1<q<\frac{N-s}{N-4},\\
O_1(\varepsilon^{N-\frac{(N-4)q}{2}-s}|\ln\varepsilon|), & \ q=\frac{N-s}{N-4},\\
O_1(\varepsilon^{N-\frac{(N-4)q}{2}-s}), & \ \frac{N-s}{N-4}<q<2^{**}(s).\\
\end{cases}
\end{eqnarray}
\end{lemma}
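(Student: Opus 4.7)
The plan is to exploit the scale-covariance of $U_\varepsilon$, namely $U_\varepsilon(x)=\varepsilon^{-(N-4)/2}U_1(x/\varepsilon)$ (up to the explicit constant), and to split every integral over $\Omega$ as $\int_{B_R}+\int_{B_{2R}\setminus B_R}$, using that $u_\varepsilon\equiv U_\varepsilon$ on $B_R$ and $u_\varepsilon\equiv 0$ off $B_{2R}$. Under the change of variables $y=x/\varepsilon$, the integrals on $B_R$ become integrals on $B_{R/\varepsilon}$ of explicit functions of $U_1$, so as $\varepsilon\to 0$ they either tend to a fixed finite integral over $\mathbb{R}^N$ or blow up at a rate controlled by the tail behavior of the integrand; the integrals on $B_{2R}\setminus B_R$ contribute only higher-order error, since on this fixed annulus the bounds $U_\varepsilon,|\nabla U_\varepsilon|,|\Delta U_\varepsilon|=O(\varepsilon^{(N-4)/2})$ hold uniformly.

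For $\|u_\varepsilon\|^2$, I would expand $\Delta(\tau U_\varepsilon)=\tau\Delta U_\varepsilon+2\nabla\tau\cdot\nabla U_\varepsilon+U_\varepsilon\Delta\tau$ by the Leibniz rule. The first term, squared and integrated, is the principal contribution: on $B_R$ it equals $\int_{\mathbb{R}^N}|\Delta U_\varepsilon|^2-\int_{|x|>R}|\Delta U_\varepsilon|^2$. The full-space integral equals $\|U_\varepsilon\|^2=S^{N/4}$ because $U_\varepsilon$ realizes the Sobolev constant $S$, while the tail scales as $\int_{|y|>R/\varepsilon}|\Delta U_1(y)|^2\,dy$ and $|\Delta U_1(y)|^2\sim|y|^{-2(N-2)}$ at infinity, giving $O(\varepsilon^{N-4})$. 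All remaining cross and lower-order terms are supported on $B_{2R}\setminus B_R$ and are bounded termwise by $O(\varepsilon^{N-4})$ via the pointwise estimates above. The computation for $\|u_\varepsilon\|_{2^{**}}^{2^{**}}$ is simpler because no derivatives of $\tau$ enter: $\int_\Omega\tau^{2^{**}}U_\varepsilon^{2^{**}}=S^{N/4}-\int_{|x|>R}U_\varepsilon^{2^{**}}+\int_{B_{2R}\setminus B_R}(\tau^{2^{**}}-1)U_\varepsilon^{2^{**}}$, and the scaling $\int_{|y|>R/\varepsilon}U_1^{2^{**}}\,dy=O((\varepsilon/R)^N)$ yields the error $O(\varepsilon^N)$. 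The identities for $v_\varepsilon$ then follow by dividing by $\|u_\varepsilon\|_{2^{**}}$ and expanding $(S^{N/4}+O(\varepsilon^N))^{2/2^{**}}=S^{(N-4)/4}+O(\varepsilon^N)$.

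For the $L^\rho$ and weighted $L^q$ estimates I would substitute $x=\varepsilon y$ to get the master identity
\begin{equation*}
\int_{B_R}U_\varepsilon^\rho\,dx=C\,\varepsilon^{N-(N-4)\rho/2}\int_{B_{R/\varepsilon}}(1+|y|^2)^{-(N-4)\rho/2}\,dy,
\end{equation*}
and analogously with the factor $|y|^{-s}$ inside. The three regimes in \eqref{Nn} and \eqref{Nnn} then correspond exactly to the three regimes for the inner integral as $R/\varepsilon\to\infty$: the integrand is integrable at infinity iff $(N-4)\rho>N$ (respectively $s+(N-4)q>N$), in which case the inner integral tends to a positive constant and both an upper and a lower bound of the stated order follow; if the reverse strict inequality holds, the integral diverges like the power $(R/\varepsilon)^{N-(N-4)\rho}$ (resp.\ $(R/\varepsilon)^{N-s-(N-4)q}$), and matching constants on an annulus $1\le|y|\le R/\varepsilon$ gives two-sided bounds and hence the $O_1$ conclusion; the borderline equality yields the logarithmic factor. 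The contribution from $B_{2R}\setminus B_R$ is $O(\varepsilon^{(N-4)\rho/2})$ (respectively the weighted analogue), which is absorbed in each case. Dividing by $\|u_\varepsilon\|_{2^{**}}^\rho$, which is bounded both above and below by positive constants, preserves the $O_1$ estimates for $v_\varepsilon$.

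The main obstacle is purely bookkeeping: one has to expand the biharmonic Leibniz rule into four squared and six cross terms, verify that all terms supported on $B_{2R}\setminus B_R$ produce errors of the claimed order, and then carry out the three-case analysis of the inner integral separately for the pure and weighted norms. No single step is deep, but the two-sided $O_1$ lower bounds in the sub-critical and borderline cases require an honest lower estimate on an annulus $\{|y|\ge 1\}\cap B_{R/\varepsilon}$, which is the only place one needs to be slightly careful rather than simply bounding from above.
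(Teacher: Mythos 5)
Your proposal is correct: the scaling identity $U_\varepsilon(x)=\varepsilon^{-(N-4)/2}U_1(x/\varepsilon)$, the splitting $\int_\Omega=\int_{B_R}+\int_{B_{2R}\setminus B_R}$ with the Leibniz expansion of $\Delta(\tau U_\varepsilon)$, and the three-regime analysis of $\int_{B_{R/\varepsilon}}|y|^{-s}(1+|y|^2)^{-(N-4)q/2}\,\mathrm{d}y$ according to integrability at infinity is exactly the standard argument, and your exponent bookkeeping (tail $O(\varepsilon^{N-4})$ for $|\Delta U_\varepsilon|^2$, $O(\varepsilon^N)$ for $U_\varepsilon^{2^{**}}$, and the crossover at $\rho=N/(N-4)$, resp. $q=(N-s)/(N-4)$) checks out. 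The paper does not prove this lemma at all but simply cites \cite{li}, \cite{B. J. Xuan} and \cite{4Hardylinj}, where the computation you outline is carried out, so your write-up is a faithful reconstruction rather than a divergent route.
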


With the help of the Talenti functions given above, we can show that the mountain pass
level of $I_\lambda$ around $0$ is strictly less than $c(S)$.

\begin{lemma}\label{lem4}
Assume that $(f_1)$-$(f_2)$, and the condition  $(i)$ or $(ii)$ of Theorem \ref{th} hold.
Then there exists a $u^*\in H_0^2(\Omega)$ such that
\begin{eqnarray}\label{1}
\sup_{t\geq0}I_\lambda(tu^*)<c(S),
\end{eqnarray}
where $c(S):=\dfrac{2}{N}S^{\frac{N}{4}}$.
\end{lemma}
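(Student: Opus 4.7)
The plan is to take $u^*:=v_\varepsilon$ from Lemma~\ref{Y1} for a sufficiently small $\varepsilon>0$ chosen at the end, and analyze the scalar function $\phi_\varepsilon(t):=I_\lambda(tv_\varepsilon)$ on $[0,\infty)$. The strategy is to locate the (positive) maximizer $t_\varepsilon$ of $\phi_\varepsilon$ in a uniform compact interval $[T_1,T_2]\subset(0,\infty)$, and then exploit the asymptotic estimates \eqref{Nn}--\eqref{Nnn} to make either the Hardy contribution or the lower bound $F(x,t)\geq C|t|^\rho$ from \eqref{Mmmm} strictly dominate the $O(\varepsilon^{N-4})$ error coming from $\|v_\varepsilon\|^2=S+O(\varepsilon^{N-4})$.

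\textit{Uniform bounds on $t_\varepsilon$.} Existence of a positive maximizer $t_\varepsilon$ is immediate since $\phi_\varepsilon(0)=0$, $\phi_\varepsilon(t)\to-\infty$ as $t\to\infty$, and $\phi_\varepsilon$ is continuous. From $\phi_\varepsilon'(t_\varepsilon)=0$ together with the fact that the Hardy and $f$ contributions in $\phi_\varepsilon'$ are nonnegative on $[0,\infty)$ (by $\lambda>0$ and $(f_2)$), I obtain $t_\varepsilon^{2^{**}-2}\leq \|v_\varepsilon\|^2=S+O(\varepsilon^{N-4})$, whence $t_\varepsilon\leq T_2$. For the lower bound, I pick a fixed small $t_*>0$ (independent of $\varepsilon$) and use \eqref{Mmm} with a fixed parameter, together with $\|v_\varepsilon\|_2\to 0$ from \eqref{Nn} and $\int_\Omega|v_\varepsilon|^q|x|^{-s}\,\mathrm{d}x\to 0$ from \eqref{Nnn} (valid since $s<4$), to get $\phi_\varepsilon(t_*)\geq \tfrac{t_*^2}{2}S-Ct_*^{2^{**}}-o(1)$, which stays bounded below by some $\beta>0$ once $t_*$ is small and $\varepsilon$ is sufficiently small. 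Combined with the trivial bound $\phi_\varepsilon(t_\varepsilon)\leq \tfrac{t_\varepsilon^2}{2}\|v_\varepsilon\|^2$ (every other term in $\phi_\varepsilon$ being nonpositive), this forces $t_\varepsilon\geq T_1>0$ uniformly.

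\textit{The key upper bound and case analysis.} Using $\|v_\varepsilon\|_{2^{**}}^{2^{**}}=1$, \eqref{Mmmm}, and $t_\varepsilon\in[T_1,T_2]$,
\[
\phi_\varepsilon(t_\varepsilon)\leq \max_{t\geq 0}\Big[\tfrac{t^2}{2}\|v_\varepsilon\|^2-\tfrac{t^{2^{**}}}{2^{**}}\Big]-\frac{\lambda T_1^q}{q}\int_\Omega\frac{|v_\varepsilon|^q}{|x|^s}\,\mathrm{d}x-CT_1^\rho\|v_\varepsilon\|_\rho^\rho.
\]
The elementary identity $\max_{t\geq 0}[\tfrac{t^2}{2}A-\tfrac{t^{2^{**}}}{2^{**}}]=\tfrac{2}{N}A^{N/4}$ applied with $A=\|v_\varepsilon\|^2=S+O(\varepsilon^{N-4})$ then yields
\[
\phi_\varepsilon(t_\varepsilon)\leq \tfrac{2}{N}S^{N/4}+O(\varepsilon^{N-4})-\frac{\lambda T_1^q}{q}\int_\Omega\frac{|v_\varepsilon|^q}{|x|^s}\,\mathrm{d}x-CT_1^\rho\|v_\varepsilon\|_\rho^\rho.
\]
It remains to check in each scenario of (i) or (ii) that at least one subtracted term strictly dominates $\varepsilon^{N-4}$: if $q=2$ and $N\geq 8-s$, \eqref{Nnn} gives $\int|v_\varepsilon|^2|x|^{-s}\asymp \varepsilon^{4-s}$ (with an extra $|\ln\varepsilon|$ factor when $N=8-s$), and $4-s\leq N-4$ provides the required strict inequality or logarithmic improvement; if $q>2$ and $q>\max\{(N-s)/(N-4),\,2(4-s)/(N-4)\}$, \eqref{Nnn} gives $\int|v_\varepsilon|^q|x|^{-s}\asymp \varepsilon^{N-(N-4)q/2-s}$ and $q>2(4-s)/(N-4)$ is precisely that this exponent is strictly less than $N-4$; in the alternative $\rho$-scenario $\max\{N/(N-4),\,8/(N-4)\}<\rho<2^{**}$ appearing in both (i) and (ii), \eqref{Nn} gives $\|v_\varepsilon\|_\rho^\rho\asymp \varepsilon^{N-(N-4)\rho/2}$ and $\rho>8/(N-4)$ is precisely that this exponent is strictly less than $N-4$. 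Picking $\varepsilon$ small in each case yields $\phi_\varepsilon(t_\varepsilon)<\tfrac{2}{N}S^{N/4}=c(S)$, so $u^*:=v_\varepsilon$ satisfies \eqref{1}.

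\textit{Main obstacle.} The main technical difficulty lies in establishing the uniform positive lower bound $t_\varepsilon\geq T_1$: without the consequent inequalities $t_\varepsilon^q\geq T_1^q$ and $t_\varepsilon^\rho\geq T_1^\rho$, the asymptotic Hardy or $\rho$-norm contributions cannot be made to beat the $O(\varepsilon^{N-4})$ error in a single uniform estimate. Establishing this bound requires carefully tuning the growth inequalities \eqref{Mm}--\eqref{Mmm} on $F$ while leveraging that $\|v_\varepsilon\|_{2^{**}}$ stays fixed and $\|v_\varepsilon\|_2$ vanishes as $\varepsilon\to 0$.
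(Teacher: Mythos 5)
Your proposal is correct and follows essentially the same route as the paper: take the normalized truncated Talenti function $v_\varepsilon$, reduce to the elementary maximum $\max_{t\ge0}[\tfrac{t^2}{2}\|v_\varepsilon\|^2-\tfrac{t^{2^{**}}}{2^{**}}]=\tfrac{2}{N}\|v_\varepsilon\|^{N/2}=c(S)+O(\varepsilon^{N-4})$, and beat the $O(\varepsilon^{N-4})$ error with the Hardy term or the $\rho$-norm term via \eqref{Nn}--\eqref{Nnn}, with the identical case analysis. The only (immaterial) difference is that you pin the maximizer $t_\varepsilon$ in a uniform interval $[T_1,T_2]$, whereas the paper sidesteps this by noting $\psi_{v_\varepsilon}(t)<c(S)$ trivially for $t<t_0$ and then using the lower bound $t\ge t_0$ directly on $[t_0,\infty)$.
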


\begin{proof}
Define the fibering maps associated with the energy functional $I_\lambda$ by
\begin{eqnarray}\label{O}
\begin{split}
\psi_{u}(t)=I_\lambda(tu)&=\frac{1}{2}t^2\| u\|^2-\frac{1}{q}\lambda t^q\int_{\Omega}\frac{|u|^q}{|x|^s}\mathrm{d}x-\frac{1}{2^{**}}t^{2^{**}}\| u\|_{2^{**}}^{2^{**}}
-\int_{\Omega}F(x,tu)\mathrm{d}x,\ t\geq0.\\
\end{split}
\end{eqnarray}
Recalling \eqref{u} and the fact that $F(x,t)\geq0$ for any $x\in \Omega$ and $t\geq0$,
one sees, as $t\rightarrow0$, that
\begin{equation}\label{Ww}
\psi_{v_\varepsilon}(t)\leq\frac{1}{2}t^2\| v_\varepsilon\|^2\rightarrow0,
\end{equation}
uniformly for $\varepsilon\in(0,\varepsilon_1)$, where $\varepsilon_1>0$ is a
suitably small but fixed number and $v_\varepsilon$ is given in Lemma \ref{Y1}.
Therefore, there exists a $t_0>0$, independent of $\varepsilon$, such that
\begin{eqnarray}\label{t0}
\psi_{v_\varepsilon}(t)%=I_\lambda(tv_\varepsilon)
<c(S),\qquad t\in(0,t_0).
\end{eqnarray}

 Set $g(t)=\dfrac{1}{2}t^2\| v_\varepsilon\|^2-\dfrac{1}{2^{**}}t^{2^{**}}$, then
\begin{eqnarray*}\label{}
\psi_{v_\varepsilon}(t)=g(t)-\frac{1}{q}\lambda t^q\int_{\Omega}\frac{|v_\varepsilon|^q}{|x|^s}\mathrm{d}x
-\int_{\Omega}F(x,tv_\varepsilon)\mathrm{d}x,\qquad  t\geq0.
\end{eqnarray*}
According to \eqref{Mmmm},  there exists a positive constant $C$ such that
\begin{eqnarray}\label{Nb}
\begin{split}
\psi_{v_\varepsilon}(t)%=_\lambda(tv_\varepsilon)
&\leq \max\limits_{t\geq t_0}g(t)-\dfrac{1}{q}\lambda t^q\int_{\Omega}\frac{|v_\varepsilon|^q}{|x|^s}\mathrm{d}x
-Ct^\rho\|v_\varepsilon\|_\rho^{\rho}\\
&\leq
\max\limits_{t\geq 0}g(t)-\dfrac{1}{q}\lambda t_0^q\int_{\Omega}\frac{|v_\varepsilon|^q}{|x|^s}\mathrm{d}x
-Ct_0^\rho\|v_\varepsilon\|_\rho^{\rho}, \qquad t\in[t_0,\infty).\\
\end{split}
\end{eqnarray}
By a direct calculation,   $g$ takes its maximum at $t_\varepsilon^*:=\|v_\varepsilon\|^{\frac{2}{2^{**}-2}}$ and $g(t_\varepsilon^*)=\dfrac{2}{N}\|v_\varepsilon\|^{\frac{N}{2}}$.
%$g'(t_\varepsilon^*)=0$
Therefore, in view of this, \eqref{u} and \eqref{Nb}, one sees, for $t\geq t_0$, that
\allowdisplaybreaks  \begin{align*}%\label{}\nonumber   \end{align*}%\begin{eqnarray*}
%\begin{split}
\psi_{v_\varepsilon}(t)%=I_\lambda(tv_\varepsilon)
&\leq g(t_\varepsilon^*)-\frac{1}{q}\lambda t_0^q\int_{\Omega}\frac{|v_\varepsilon|^q}{|x|^s}\mathrm{d}x
-Ct_0^\rho\|v_\varepsilon\|_\rho^{\rho}\\
&=\frac{2}{N}\big(S+O(\varepsilon^{N-4})\big)^{\frac{N}{4}}
-\frac{1}{q}\lambda t_0^q\int_{\Omega}\frac{|v_\varepsilon|^q}{|x|^s}\mathrm{d}x
-Ct_0^\rho\|v_\varepsilon\|_\rho^{\rho}\\
&= c(S)+O(\varepsilon^{N-4})
-\frac{1}{q}\lambda t_0^q\int_{\Omega}\frac{|v_\varepsilon|^q}{|x|^s}\mathrm{d}x
-Ct_0^\rho\|v_\varepsilon\|_\rho^{\rho},\qquad \varepsilon\rightarrow0.
%\end{split}
%\end{eqnarray*}
\end{align*}
Our goal is to show, for $\varepsilon$ suitably small, that
\begin{equation}\label{T0}
\psi_{v_\varepsilon}(t)<c(S),\qquad t\in[t_0,\infty),
\end{equation}
 which is fulfilled once we can prove that
\begin{eqnarray}\label{Y2}
O(\varepsilon^{N-4})
-\frac{1}{q}\lambda t_0^q\int_{\Omega}\frac{|v_\varepsilon|^q}{|x|^s}\mathrm{d}x
-Ct_0^\rho\|v_\varepsilon\|_\rho^{\rho}<0.
\end{eqnarray}
It is easy to see that \eqref{Y2} holds if either $(I)$ or $(II)$ of the following is valid
\begin{eqnarray*}
\begin{split}
&(I)\ \  O(\varepsilon^{N-4})
-Ct_0^\rho\|v_\varepsilon\|_\rho^{\rho}<0;\\
&(II)\ \ O(\varepsilon^{N-4})
-\frac{1}{q}\lambda t_0^q\int_{\Omega}\frac{|v_\varepsilon|^q}{|x|^s}\mathrm{d}x
<0.
\end{split}
\end{eqnarray*}

First, when $0\leq s<4$, $q\geq 2$
and $\max\{\frac{N}{N-4},\frac{8}{N-4}\}<\rho<2^{**}$, by recalling \eqref{Nn},
we obtain
\begin{eqnarray*}
 \lim\limits_{\varepsilon\rightarrow0}\dfrac{\varepsilon^{N-4}}
{\varepsilon^{N-\frac{(N-4)\rho}{2}}}= \lim\limits_{\varepsilon\rightarrow0}\varepsilon^{\frac{(N-4)\rho}{2}-4}=0,
\end{eqnarray*}
which implies that $(I)$ holds.

Next, we shall show that $(II)$ is fulfilled for small $\varepsilon$
when other cases in (i) or (ii) of Theorem \ref{th} are satisfied, with the help of \eqref{Nnn}.
Indeed, if $q=2$ and $N=8-s$, then $q=\frac{N-s}{N-4}$, which, together with \eqref{Nnn}, implies that
\begin{eqnarray}\label{00}
 \lim\limits_{\varepsilon\rightarrow0}\dfrac{\varepsilon^{N-4}}
{\varepsilon^{N-\frac{(N-4)q}{2}-s}|\ln\varepsilon|}= \lim\limits_{\varepsilon\rightarrow0}\dfrac{1}
{|\ln\varepsilon|}=0.
\end{eqnarray}
If $q=2$ and $N>8-s$, then $q>\frac{N-s}{N-4}$ and
\begin{eqnarray}\label{II}
 \lim\limits_{\varepsilon\rightarrow0}\dfrac{\varepsilon^{N-4}}
{\varepsilon^{N-\frac{(N-4)q}{2}-s}}= \lim\limits_{\varepsilon\rightarrow0}
\varepsilon^{N-(8-s)}=0.
\end{eqnarray}
It follows from \eqref{00} and \eqref{II} that $(II)$ is valid if $q=2$ and $N\geq8-s$.

Similarly, if $q>2$ and $\max\{\frac{N-s}{N-4},\frac{2(4-s)}{N-4}\}< q<2^{**}(s)$,
we have
\begin{eqnarray*}
 \lim\limits_{\varepsilon\rightarrow0}\dfrac{\varepsilon^{N-4}}
{\varepsilon^{N-\frac{(N-4)q}{2}-s}}= \lim\limits_{\varepsilon\rightarrow0}
\varepsilon^{\frac{1}{2}[(N-4)q-(8-2s)]}=0,
\end{eqnarray*}
which also implies that $(II)$ holds.

In conclusion, %we obtain the conditions that make \eqref{Y2} valid:
%when $q>2$,  $\max\{\frac{N-s}{N-4},\frac{2(4-s)}{N-4}\}< q<2^*(s)(0\leq s< %4)$ or $\max\{\frac{N}{N-4},\frac{8}{N-4}\}<\rho<2^*$;  when $q=2$,  %$N\geq8-s$ or $\max\{\frac{N}{N-4},\frac{8}{N-4}\}<\rho<2^*$. Then, we obtain %that \eqref{T0} holds.
 if the condition  $(i)$ or $(ii)$ of Theorem \ref{th} holds, by combining \eqref{t0} with \eqref{T0}, we have, for $\varepsilon$ suitably small, that
\begin{eqnarray*}\label{}
\sup_{t\geq 0}\psi_{v_\varepsilon}(t)
<c(S).
\end{eqnarray*}
Fix such an $\varepsilon>0$ and take $u^*\equiv v_\varepsilon$. The proof is complete.
\end{proof}

In what follows, we shall show the existence of the weak solutions to problem \eqref{P1.1}
on the basis of Lemmas \ref{lem3} and \ref{lem4} and the Mountain Pass Lemma.

\begin{proof}[Proof of Theorem \ref{th} $(i)$ and $(ii)$]
We first show that the functional $I_\lambda$ satisfies the mountain pass geometry in both cases.
If $q>2$, according to  \eqref{Mmm} and \eqref{Vv}, for any $\varepsilon>0$, there exists a $C_3(\varepsilon)>0$ such that
\begin{eqnarray}\label{mp-1}
\begin{split}
I_\lambda(u)%&=\frac{1}{2}\| u\|^2-\frac{\lambda}{q}\int_{\Omega}\frac{|u|^q}{|x|^s}\mathrm{d}x
%-\frac{1}{2^{*}}\|u\|_{2^{*}}^{2^{*}}-\int_{\Omega}F(x,u)\mathrm{d}x\\
&\geq\frac{1}{2}\|u\|^2-\frac{\lambda_{s,q}^{-\frac{q}{2}}}{q}\lambda\|u\|^q
-\frac{S^{-\frac{2^{**}}{2}}}{2^{**}}\|u\|^{2^{**}}
-\frac{\lambda_{0,2}^{-1}}{2}\varepsilon\|u\|^{2}
-C_3(\varepsilon)S^{-\frac{2^{**}}{2}}\|u\|^{2^{**}}\\
&=(1-\lambda_{0,2}^{-1}\varepsilon)\frac{\|u\|^2}{2}
-\frac{\lambda_{s,q}^{-\frac{q}{2}}}{q}\lambda\|u\|^q
-\Big(\frac{1}{{2^{**}}}+C_3(\varepsilon)\Big)S^{-\frac{2^{**}}{2}}\|u\|^{2^{**}},
\qquad \forall \ u\in H_{0}^{2}(\Omega)\backslash\{0\}.
\end{split}
\end{eqnarray}
If $q=2$, by using  \eqref{Mmm}, \eqref{Vv} again and \eqref{Hhh}, we obtain that
\begin{eqnarray}\label{7.1}
\begin{split}
I_\lambda(u)%&=\frac{1}{2}(\| u\|^2
%-\lambda\int_{\Omega}\frac{|u|^2}{|x|^s}\mathrm{d}x)
%-\frac{1}{2^{*}}\|u\|_{2^{*}}^{2^{*}}-\int_{\Omega}F(x,u)\mathrm{d}x\\
&\geq\frac{\mu_s}{2}\|u\|^2
-\frac{S^{-\frac{2^{**}}{2}}}{2^{**}}\|u\|^{2^{**}}
-\frac{\lambda_{0,2}^{-1}}{2}\varepsilon\|u\|^{2}
-C_3(\varepsilon)S^{-\frac{2^{**}}{2}}\|u\|^{2^{**}}\\
&=(\mu_s-\lambda_{0,2}^{-1}\varepsilon)\frac{\|u\|^2}{2}
-\Big(\frac{1}{{2^{**}}}+C_3(\varepsilon)\Big)S^{-\frac{2^{**}}{2}}\|u\|^{2^{**}},\qquad \forall \ u\in H_{0}^{2}(\Omega)\backslash\{0\}.
\end{split}
\end{eqnarray}
Choosing $\varepsilon>0$ so small that $1-\lambda_{0,2}^{-1}\varepsilon>0$ and $\mu_s-\lambda_{0,2}^{-1}\varepsilon>0$,
and noticing $2\leq q<2^{**}$, one sees that there exist $\beta,\ r>0$ such that $I_\lambda(u)\geq\beta$ for all $\|u\|=r$.

On the other hand,
%Taking $u^*$ given in Lemma \ref{lem4} and
recalling that $F(x, t)\geq0$  for any $x\in \Omega$ and $t\geq0$ due to $(f_2)$, we have,
for any $u\in H^2_0(\Omega)\backslash\{0\}$
\begin{eqnarray}\label{7.2}
\begin{split}
\psi_{u}(t)%=I_\lambda(tu)
%&=\frac{1}{2}t^2\|u\|^2-\frac{1}{q}\lambda t^q\int_{\Omega}\frac{|u|^{q}}{|x|^s}\mathrm{d}x-
%\frac{1}{2^{*}}t^{2^*}\|u\|_{2^{*}}^{2^{*}}
%-\int_{\Omega}F(x,tu)\mathrm{d}x\\
\leq\frac{1}{2}t^2\|u\|^2-\frac{1}{2^{**}}t^{2^{**}}\|u\|_{2^{**}}^{2^{**}},\\
\end{split}
\end{eqnarray}
which implies that $\lim\limits_{t\rightarrow\infty}\psi_{u}(t)=-\infty$. Therefore, there exists a $t_u>0$ suitably large  such that $\|t_uu\|>r$ and $\psi_{u}(t_u)=I_\lambda(t_uu)<0$. Thus, $I_\lambda$ satisfies the mountain pass geometry around $0$,
and there exists a sequence $\{u_n\}\subset H_0^2(\Omega)$ such that $I_\lambda(u_{n})\rightarrow c_0\geq\beta$ and $I_\lambda'(u_{n})\rightarrow 0$ in $H^{-2}(\Omega)$ as $n\rightarrow\infty$,
where
\begin{equation*}
c_0=\inf_{\gamma\in\Gamma}\max_{t\in[0,1]}I_\lambda(\gamma(t))\
and\ \
\Gamma=\{\gamma\in C([0,1],H_{0}^{2}(\Omega)): \gamma(0)=0, \gamma(1)=t_{u^*}u^*\},
\end{equation*}
and $u^*$ is given in Lemma \ref{lem4}.
In view of \eqref{mp-1} and \eqref{1}, one sees
\begin{equation}\label{BM}
 c_0
\leq\max\limits_{t\in[0,1]}I_\lambda(tt_{u^*}u^*)
\leq\sup\limits_{t\geq0}I_\lambda(tu^*)<c(S).
\end{equation}
It then follows  from Lemma \ref{lem3} that $I_\lambda(u)$  satisfies the $(PS)_{c_0}$ condition.
Consequently, there exists  a convergent subsequence of $\{u_{n}\}$, still denoted by $\{u_{n}\}$,
such that $u_{n}\rightarrow u$ in $H_0^2(\Omega)$ as $n\rightarrow\infty$,
which implies that $I_\lambda(u)=c_0$ and $I_\lambda'(u)=0$, i.e.,
$u$ is a nonnegative  solution to problem \eqref{P1.1}.
%Thus, $u\geq0$ and $u\not\equiv0$. By the Strong Maximum Principle we get $u$ is a positive  solution to problem \eqref{P1.1}.
The proof of (i) and (ii) of Theorem \ref{th} is complete.
\end{proof}

\subsection{$The \ case \ s=4.$}

When $s=4$, the difficulty we encounter is the lack of compactness of
the mapping $u\rightarrow\dfrac{u}{|x|^{2}}$ from $H_0^2(\Omega)$ into
$L^2(\Omega)$ and the Sobolev embedding $H_0^2(\Omega)\hookrightarrow L^{2^{**}}(\Omega)$,
which prevents us from establishing the usual (PS) condition directly.
However, inspired by some ideas from \cite{li}, we can show that
$I_\lambda$ satisfies the $(PS)_c$ condition for some $c$,
and then the existence of a mountain pass type solution to problem \eqref{P1.1} follows.

\begin{lemma}\label{lem2.1}
Assume that $q=2$, $s=4$ and that $f(x,t)$ satisfies $(f_1)$ and $(f_2)$.
Let $\{u_{n}\}\subset H_0^2(\Omega)$ be a  sequence such that $I_\lambda(u_{n})\rightarrow c<c(S_\lambda)$
and $I_\lambda'(u_{n})\rightarrow 0$ in $H^{-2}(\Omega)$ as $n\rightarrow\infty$,
where $c(S_\lambda):=\dfrac{2}{N}S_\lambda^{\frac{N}{4}}$. Then $I_\lambda(u)$ satisfies the $(PS)_c$
condition provided that  $0<\lambda<\lambda_{4,2}$.
\end{lemma}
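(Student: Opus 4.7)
The plan is to adapt the strategy of Lemma \ref{lem3} by replacing the standard norm with the equivalent norm $\|u\|_\lambda^2:=\int_\Omega(|\Delta u|^2-\lambda|u|^2/|x|^4)\,dx$ furnished by Lemma \ref{w}, and by replacing the compactness of the Hardy embedding (which fails at $s=4$) with the Brezis-Lieb-type splitting afforded by Lemma \ref{lemB}. First I would establish boundedness of $\{u_n\}$ in $H_0^2(\Omega)$: mimicking \eqref{Y3}, the combination $I_\lambda(u_n)-\frac{1}{\rho}\langle I_\lambda'(u_n),u_n\rangle$ dominates $(\frac{1}{2}-\frac{1}{\rho})\|u_n\|_\lambda^2$, and via Lemma \ref{w} this in turn dominates $(\frac{1}{2}-\frac{1}{\rho})\mu_4\|u_n\|^2$, giving boundedness. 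Passing to a subsequence, I obtain $u_n\rightharpoonup u$ in $H_0^2(\Omega)$, $u_n\to u$ a.e.\ and strongly in $L^r(\Omega)$ for every $r\in[1,2^{**})$, and $|u_n|^{2^{**}-2}u_n\rightharpoonup|u|^{2^{**}-2}u$ in $L^{2^{**}/(2^{**}-1)}(\Omega)$; passing to the limit in $\langle I_\lambda'(u_n),\varphi\rangle\to0$ gives $I_\lambda'(u)=0$, hence $\langle I_\lambda'(u),u\rangle=0$.

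Next, I would handle the subcritical nonlinearity exactly as in Lemma \ref{lem3}, using the growth bound \eqref{Mm} and Vitali's theorem to deduce $\int_\Omega f(x,u_n)u_n\,dx\to\int_\Omega f(x,u)u\,dx$ and analogous statements for $\int F(x,u_n)\,dx$ and $\int f(x,u_n)u\,dx$. Setting $w_n=u_n-u$, weak convergence in $H_0^2(\Omega)$ yields $\|u_n\|^2=\|w_n\|^2+\|u\|^2+o(1)$, and Lemma \ref{Lieb} applied to $\{u_n\}\subset L^{2^{**}}(\Omega)$ gives $\|u_n\|_{2^{**}}^{2^{**}}=\|w_n\|_{2^{**}}^{2^{**}}+\|u\|_{2^{**}}^{2^{**}}+o(1)$. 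The crucial new ingredient is Lemma \ref{lemB} with the choices $r=q=2$ and $\delta=4$ (valid because $\delta=4<2N=Nq/r$ since $N\geq5$), which produces
\begin{equation*}
\int_\Omega\frac{|u_n|^2}{|x|^4}\,dx=\int_\Omega\frac{|w_n|^2}{|x|^4}\,dx+\int_\Omega\frac{|u|^2}{|x|^4}\,dx+o(1).
\end{equation*}
Combining these three splittings yields $\|u_n\|_\lambda^2=\|w_n\|_\lambda^2+\|u\|_\lambda^2+o(1)$.

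Plugging the splittings into $\langle I_\lambda'(u_n),u_n\rangle=o(1)$ and using $\langle I_\lambda'(u),u\rangle=0$ gives $\|w_n\|_\lambda^2-\|w_n\|_{2^{**}}^{2^{**}}=o(1)$; passing to a further subsequence I may assume $\|w_n\|_\lambda^2\to l\geq0$ and $\|w_n\|_{2^{**}}^{2^{**}}\to l$. If $l>0$, the definition \eqref{Hh} of $S_\lambda$ forces $S_\lambda\cdot l^{2/2^{**}}\leq l$, hence $l\geq S_\lambda^{N/4}$. Similarly plugging the splittings into $I_\lambda(u_n)\to c$ yields
\begin{equation*}
c=I_\lambda(u)+\Bigl(\tfrac{1}{2}-\tfrac{1}{2^{**}}\Bigr)l=I_\lambda(u)+\tfrac{2}{N}l.
\end{equation*}
Arguing as in \eqref{Mk}, from $\langle I_\lambda'(u),u\rangle=0$ together with $2=q<\rho<2^{**}$ and $(f_2)$ one gets $I_\lambda(u)=(\frac{1}{2}-\frac{1}{\rho})\|u\|_\lambda^2+(\frac{1}{\rho}-\frac{1}{2^{**}})\|u\|_{2^{**}}^{2^{**}}+\int_\Omega(\frac{1}{\rho}f(x,u)u-F(x,u))\,dx\geq0$, whence $c\geq\frac{2}{N}S_\lambda^{N/4}=c(S_\lambda)$, contradicting the hypothesis $c<c(S_\lambda)$. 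Therefore $l=0$, which by the equivalence of norms from Lemma \ref{w} delivers $u_n\to u$ in $H_0^2(\Omega)$.

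The main obstacle lies in step two: the loss of compactness for $u\mapsto u/|x|^2$ when $s=4$ means the Hardy term cannot simply disappear in the limit as in Lemma \ref{lem3}. Lemma \ref{lemB} circumvents this by providing an exact Brezis-Lieb identity for the weighted $L^2$ norm, so that everything can be packaged into $\|\cdot\|_\lambda$ and the argument then runs parallel to the case $s<4$, with $S_\lambda$ replacing $S$ in the critical threshold.
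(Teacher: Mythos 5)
Your proposal is correct and follows essentially the same route as the paper's own proof: boundedness via the $I_\lambda-\frac{1}{\rho}\langle I_\lambda',\cdot\rangle$ test combined with Lemma \ref{w}, the Vitali argument for the $f$-terms, the three Br\'ezis--Lieb splittings (including the weighted one from Lemma \ref{lemB} with $r=q=2$, $\delta=4$), the dichotomy on the common limit of $\int_\Omega(|\Delta w_n|^2-\lambda|w_n|^2/|x|^4)\,\mathrm{d}x$ and $\|w_n\|_{2^{**}}^{2^{**}}$, the contradiction between $c<c(S_\lambda)$ and $I_\lambda(u)\geq0$, and the final passage back to the $H_0^2$-norm via the equivalence in Lemma \ref{w}. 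The only slip is harmless: with $r=q=2$ the admissibility condition in Lemma \ref{lemB} reads $\delta<Nq/r=N$ (not $2N$), and $4<N$ still holds since $N\geq5$.
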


\begin{proof}
Recalling \eqref{Y3} with $s=4$, one sees that $\{u_n\}$ is bounded
in $H_0^2(\Omega)$ when $0<\lambda<\lambda_{4,2}$.
Hence, according to  Lemma \ref{lem1}, there is a subsequence of $\{u_{n}\}$,
still denoted by $\{u_{n}\}$, such that, as $n\rightarrow\infty$,
\begin{eqnarray}\label{A1}
\ \ \ \ \ \ \ \ \ \ \ \ \begin{cases}
u_{n}\rightharpoonup u \ in \ H_{0}^2(\Omega),\\
u_{n}\rightarrow u \ in \ L^r(\Omega)\ (1\leq r<2^{**}),\\
u_{n}\rightarrow u \ in \ H_0^1(\Omega),\\
u_{n}\rightharpoonup u \ in \ L^{2}(\Omega, |x|^{-2}),\\
%\dfrac{u_n}{|x|^{2}}\rightharpoonup \dfrac{u}{|x|^{2}} \ in \ L^2(\Omega),\\
|u_{n}|^{2^{**}-2}u_n\rightharpoonup |u|^{2^{**}-2}u \ in \ L^{\frac{2^{**}}{2^{**}-1}}(\Omega),\\
u_{n}\rightarrow u \ a.e.\ in \ \Omega.
\end{cases}
\end{eqnarray}

As was done in the proof of Lemma \ref{lem3}, set $w_n=u_n-u$.
Then $\{w_n\}$ is bounded in $H_0^2(\Omega)$,
which ensures that there exists a subsequence of $\{w_{n}\}$,
still denoted by $\{w_{n}\}$, such that
$$\lim\limits_{n\rightarrow\infty}\|w_n\|^2=l\geq0.$$
In what follows, we shall show $l=0$. In view of \eqref{A1}
and Br\'{e}zis-Lieb's lemma, we know that \eqref{Zz} and \eqref{B} are still valid.
In addition, from Lemma  \ref{lemB}
one see,  as $n\rightarrow\infty$, that
\begin{eqnarray}\label{B2}
\begin{split}
\int_{\Omega}\frac{|u_n|^2}{|x|^4}\mathrm{d}x&=
\int_{\Omega}\frac{|w_n|^2}{|x|^4}\mathrm{d}x
+\int_{\Omega}\frac{|u|^2}{|x|^4}\mathrm{d}x+o(1).
\end{split}
\end{eqnarray}
Combining \eqref{C}, \eqref{E}, \eqref{Zz}, \eqref{B}, \eqref{A1},
 \eqref{B2} with the assumption that $I_\lambda'(u_{n})\rightarrow 0$
 in $H^{-2}(\Omega)$ as $n\rightarrow\infty$, one gets
\begin{eqnarray*}
\begin{split}
o(1)=\langle I_\lambda'(u_n),u_n\rangle
&=\int_{\Omega}(|\Delta u_n|^2-\lambda\frac{|u_n|^2}{|x|^4})\mathrm{d}x
-\|u_n\|_{2^{**}}^{2^{**}}-\int_{\Omega}f(x,u_n)u_n\mathrm{d}x\\
&=\int_{\Omega}(|\Delta u|^2-\lambda\frac{|u|^2}{|x|^4})\mathrm{d}x
-\|u\|_{2^{**}}^{2^{**}}-\int_{\Omega}f(x,u)u\mathrm{d}x\\
& \ \ \ +\| w_n\|^2-\lambda\int_{\Omega}\frac{|w_n|^2}{|x|^4}\mathrm{d}x
-\|w_n\|_{2^{**}}^{2^{**}}+o(1)\\
&=\langle I_\lambda'(u),u\rangle+\| w_n\|^2-\lambda\int_{\Omega}\frac{|w_n|^2}{|x|^4}\mathrm{d}x
-\|w_n\|_{2^{**}}^{2^{**}}+o(1),\qquad n\rightarrow\infty,
\end{split}
\end{eqnarray*}
and
\allowdisplaybreaks  \begin{align*}%\label{}\nonumber   \end{align*}\begin{eqnarray*}
%\begin{split}
o(1)=\langle I_\lambda'(u_n),u\rangle
&=\int_{\Omega}(\Delta u_n\Delta u-\lambda\frac{|u_n|u}{|x|^4})\mathrm{d}x
-\int_{\Omega}|u_n|^{2^{**}-2}u_nu\mathrm{d}x
-\int_{\Omega}f(x,u_n)u\mathrm{d}x\\
&=\int_{\Omega}(|\Delta u|^2-\lambda\frac{|u|^2}{|x|^4})\mathrm{d}x
-\|u\|_{2^{**}}^{2^{**}}-\int_{\Omega}f(x,u)u\mathrm{d}x+o(1)\\
&=\langle I_\lambda'(u),u\rangle+o(1), \qquad n\rightarrow\infty.
%\end{split}
%\end{eqnarray*}
\end{align*}
According to the above two equalities, we obtain
\eqref{Y4}
and
\begin{equation}\label{33}
\| w_n\|^2-\lambda\int_{\Omega}\frac{|w_n|^2}{|x|^4}\mathrm{d}x
-\|w_n\|_{2^{**}}^{2^{**}}=o(1), \qquad n\rightarrow\infty.
\end{equation}
In addition, in view of  \eqref{Hh}, one has
\begin{equation}\label{31}
\|w_n\|_{2^{**}}^{2^{**}}\leq S_\lambda^{-\frac{2^{**}}{2}}(\|w_n\|^2
-\lambda\int_{\Omega}\frac{|w_n|^2}{|x|^4}\mathrm{d}x)^{\frac{2^{**}}{2}}\leq
S_\lambda^{-\frac{2^{**}}{2}}\|w_n\|^2<C.
\end{equation}
%$\|w_n\|_{2^{*}}^{2^{*}}<C$ and $\|w_n\|^2
%-\lambda\displaystyle{\int_{\Omega}\frac{|w_n|^2}{|x|^4}\mathrm{d}x}<C$.
It then follows from \eqref{33} and \eqref{31} that we can take a subsequence of $\{w_n\}$ such that
\begin{equation}\label{Ff}
\lim\limits_{n\rightarrow\infty}(\|w_n\|^2
-\lambda\displaystyle{\int_{\Omega}\frac{|w_n|^2}{|x|^4}\mathrm{d}x})
=\lim\limits_{n\rightarrow\infty}\|w_n\|_{2^{**}}^{2^{**}}=k\geq0.
\end{equation}
Letting $n\rightarrow\infty$ in \eqref{31}, we obtain that $k\leq S_\lambda^{-\frac{2^{**}}{2}}k^{\frac{2^{**}}{2}}$.

If $k>0$, then
\begin{equation}\label{Y6}
k\geq S_\lambda^{\frac{2^{**}}{2^{**}-2}}=S_\lambda^{\frac{N}{4}}.
\end{equation}
In accordance with \eqref{D}, \eqref{Zz}, \eqref{B}, \eqref{A1},  \eqref{B2} and $I_\lambda(u_n)=c+o(1)\ as \ n\rightarrow\infty$, we have
\begin{eqnarray*}
\begin{split}
o(1)+c=I_\lambda(u_n)&=\frac{1}{2}\int_{\Omega}(|\Delta u_n|^2-\lambda\frac{|u_n|^2}{|x|^4})\mathrm{d}x
-\frac{1}{2^{**}}\|u_n\|_{2^{**}}^{2^{**}}
-\int_{\Omega}F(x,u_n)\mathrm{d}x\\
&=\frac{1}{2}\int_{\Omega}(|\Delta u|^2-\lambda\frac{|u|^2}{|x|^4})\mathrm{d}x-\frac{1}{2^{**}}\|u\|_{2^{**}}^{2^{**}}
-\int_{\Omega}F(x,u)\mathrm{d}x\\
& \ \ \ +\frac{1}{2}\| w_n\|^2-\frac{1}{2}\lambda\int_{\Omega}\frac{|w_n|^2}{|x|^4}\mathrm{d}x
-\frac{1}{2^{**}}\|w_n\|_{2^{**}}^{2^{**}}+o(1)\\
&=I_\lambda(u)+\frac{1}{2}\| w_n\|^2-\frac{1}{2}\lambda\int_{\Omega}\frac{|w_n|^2}{|x|^4}\mathrm{d}x
-\frac{1}{2^{**}}\|w_n\|_{2^{**}}^{2^{**}}+o(1),\qquad n\rightarrow\infty,
\end{split}
\end{eqnarray*}
which then yields that
\begin{eqnarray*}
I_\lambda(u)=c-\Big(\dfrac{1}{2}\| w_n\|^2-\dfrac{\lambda}{2}\displaystyle{\int_{\Omega}\dfrac{|w_n|^2}{|x|^4}\mathrm{d}x}
-\dfrac{1}{2^{**}}\|w_n\|_{2^{**}}^{2^{**}}\Big)+o(1),\qquad  n\rightarrow\infty.
\end{eqnarray*}
From the above equality and  recalling  \eqref{Ff} and \eqref{Y6}, %and letting  $n\rightarrow\infty$,
we have
\begin{eqnarray*}\label{}
\begin{split}
I_\lambda(u)=c-\Big(\frac{1}{2}-\frac{1}{2^{**}}\Big)k\leq c
-\frac{2}{N}S_\lambda^{\frac{N}{4}}<0,
\end{split}
\end{eqnarray*}
which contradicts  \eqref{Mk}.
%\begin{eqnarray*}
%\begin{split}
%I_\lambda(u)
%&=I_\lambda(u)-\frac{1}{2}\langle I_\lambda'(u),u\rangle\\
%&=\Big(\frac{1}{2}-\frac{1}{2^*}\Big)\|u_n\|_{2^{*}}^{2^{*}}+\int_{\Omega}\Big(\frac{1}{2}f(x,u)u-F(x,u)\Big)\mathrm{d}x\\
%&\geq0,
%%\end{split}
%\end{eqnarray*}
% a contradiction with \eqref{ll}.
Thus, $k=0$. Noticing that $0<\lambda<\lambda_{4,2}$, we obtain from \eqref{Hhh} that
there exists a $\mu_4>0$ such that
\begin{eqnarray*}
\mu_4\| w_n\|^2\leq\int_{\Omega}(|\Delta w_n|^2-\lambda\frac{|w_n|^2}{|x|^4})\mathrm{d}x=o(1),\qquad n\rightarrow\infty,
\end{eqnarray*}
which implies $\lim\limits_{n\rightarrow\infty}\|w_n\|^2=l=0$,
i.e., $u_n\rightarrow u$ in $H_0^2(\Omega)$ as $n\rightarrow\infty$. The proof is complete.
\end{proof}

The following lemma, which shows that the mountain pass level of $I_\lambda(u)$
around $0$ is strictly less than $c(S_\lambda)$, is parallel to Lemma \ref{lem4}.

\begin{lemma}\label{r}
Assume that $(f_1)-(f_2)$ and the condition $(iii)$ of Theorem \ref{th} hold.
Then there exists a $\widetilde{u}_\lambda^*\in H_0^2(\Omega)$ such that
\begin{eqnarray}\label{F1}
\sup_{t\geq0}I_\lambda(t\widetilde{u}_\lambda^*)<c(S_\lambda),
\end{eqnarray}
where $c(S_\lambda):=\dfrac{2}{N}S_\lambda^{\frac{N}{4}}$.
\end{lemma}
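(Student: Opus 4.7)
The plan is to mirror the proof of Lemma \ref{lem4}, with the classical Talenti functions replaced by approximate minimizers of the weighted constant $S_\lambda$. First I would introduce the Talenti-type profiles associated with the operator $\Delta^2-\lambda|x|^{-4}$ (these have a scale-invariant form built from the exponent $\gamma_\lambda$ arising from the Hardy-type separation of variables), truncate them by a smooth cut-off supported in $B_{2R}(0)$, and normalize so that the $L^{2^{**}}$-norm equals $1$. Call the resulting family $\widetilde v_\varepsilon$. The analog of Lemma \ref{Y1} should then yield expansions of the form
\begin{equation*}
\int_{\Omega}\!\Big(|\Delta \widetilde v_\varepsilon|^2-\lambda\frac{|\widetilde v_\varepsilon|^2}{|x|^4}\Big)\mathrm{d}x = S_\lambda+O(\varepsilon^{\alpha}),\qquad \|\widetilde v_\varepsilon\|_\rho^\rho = O_1(\varepsilon^{\beta(\rho)}),
\end{equation*}
for explicit exponents $\alpha=\alpha(N,\gamma_\lambda)$ and $\beta(\rho)$ depending on $N$, $\rho$ and $\gamma_\lambda$; these are precisely the estimates that the two thresholds in condition $(iii)$ are tailored for.

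Next I would set $\psi_{\widetilde v_\varepsilon}(t):=I_\lambda(t\widetilde v_\varepsilon)$ and split the analysis at a $t_0>0$ chosen independently of $\varepsilon$. For $t\in(0,t_0)$, the elementary bound $\psi_{\widetilde v_\varepsilon}(t)\leq\tfrac12 t^2 \|\widetilde v_\varepsilon\|^2$ (using $F\geq 0$ from $(f_2)$) together with uniform boundedness of $\|\widetilde v_\varepsilon\|$ gives $\psi_{\widetilde v_\varepsilon}(t)<c(S_\lambda)$ uniformly in $\varepsilon$. For $t\geq t_0$, I would dominate $\psi_{\widetilde v_\varepsilon}$ by the maximum of $g(t):=\tfrac12 t^2(S_\lambda+O(\varepsilon^\alpha))-\tfrac{1}{2^{**}}t^{2^{**}}$, which equals $\tfrac{2}{N}(S_\lambda+O(\varepsilon^\alpha))^{N/4}=c(S_\lambda)+O(\varepsilon^\alpha)$ exactly as in Lemma \ref{lem4}, then subtract the contributions of the Hardy term and of the lower order nonlinearity (using \eqref{Mmmm} to get $F(x,t)\geq C|t|^\rho$), arriving at
\begin{equation*}
\sup_{t\geq t_0}\psi_{\widetilde v_\varepsilon}(t)\leq c(S_\lambda)+O(\varepsilon^{\alpha}) -\frac{\lambda t_0^2}{2}\int_\Omega\frac{|\widetilde v_\varepsilon|^2}{|x|^4}\mathrm{d}x - Ct_0^\rho \|\widetilde v_\varepsilon\|_\rho^\rho.
\end{equation*}

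The last step is to verify that, under the hypotheses of part $(iii)$ of Theorem \ref{th}, the error $O(\varepsilon^{\alpha})$ is strictly beaten by at least one of the two negative terms for $\varepsilon$ small. The condition $\rho > N/\gamma_\lambda$ should ensure that $\|\widetilde v_\varepsilon\|_\rho^\rho$ decays more slowly than $\varepsilon^\alpha$ (so the $L^\rho$ contribution wins), while $\rho > 4(N-2-\gamma_\lambda)/(N-4)$ is tailored to the complementary range, where the Hardy integral $\int_\Omega|\widetilde v_\varepsilon|^2/|x|^4\mathrm{d}x$ supplies the needed slack; the $\max$ guarantees that one of the two situations always applies. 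The main difficulty is therefore not the functional-analytic structure (which is parallel to Lemma \ref{lem4}) but the sharp asymptotic expansion of the weighted Talenti profiles: determining $\alpha(N,\gamma_\lambda)$, computing $\|\widetilde v_\varepsilon\|_\rho^\rho$ and $\int_\Omega|\widetilde v_\varepsilon|^2/|x|^4\mathrm{d}x$ precisely enough to balance them against $\varepsilon^\alpha$, and checking that the logarithmic boundary cases arising at $\rho=N/\gamma_\lambda$ or $4(N-2-\gamma_\lambda)/(N-4)$ do not spoil the strict inequality. Once these estimates are in place, fixing a sufficiently small $\varepsilon>0$ and setting $\widetilde u_\lambda^*:=\widetilde v_\varepsilon$ yields \eqref{F1}.
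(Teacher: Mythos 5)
Your overall skeleton is the same as the paper's: weighted Talenti profiles $\widetilde{U}_{\varepsilon,\lambda}$ for $\Delta^2-\lambda|x|^{-4}$ with decay exponent $\gamma_\lambda$, truncation by the cut-off of Lemma \ref{Y1}, the fibering map split at a fixed $\widetilde{t}_0$, and the use of \eqref{Mmmm} to produce a negative $-C\widetilde t_0^{\rho}\|\cdot\|_\rho^\rho$ term that must absorb the $O(\varepsilon^{\alpha})$ error. But the crucial comparison step contains a genuine error. In your displayed bound you subtract the Hardy integral $\tfrac{\lambda t_0^2}{2}\int_\Omega|\widetilde v_\varepsilon|^2|x|^{-4}\,\mathrm{d}x$ as an additional negative term \emph{on top of} $c(S_\lambda)+O(\varepsilon^{\alpha})$. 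For $q=2$, $s=4$ the Hardy term is precisely the correction in the quadratic form $\int(|\Delta u|^2-\lambda|u|^2|x|^{-4})$ that defines $S_\lambda$, so it is already contained in the function $g$ whose maximum gives $c(S_\lambda)+O(\varepsilon^{\alpha})$; it cannot be subtracted a second time. (A sanity check: by scale invariance $\int_\Omega|\widetilde u_{\varepsilon,\lambda}|^2|x|^{-4}\,\mathrm{d}x$ tends to a positive constant as $\varepsilon\to0$, so if this extra term were legitimate it would beat any $O(\varepsilon^\alpha)$ error outright and the hypothesis on $\rho$ would be superfluous — a sign the bookkeeping is off.)

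This feeds into a second, related misreading: you interpret $\rho>\max\{\frac{N}{\gamma_\lambda},\frac{4(N-2-\gamma_\lambda)}{N-4}\}$ as covering two complementary regimes, one handled by the $L^\rho$ term and one by the Hardy integral. In fact both inequalities must hold simultaneously and they serve a single mechanism: $\rho>\frac{N}{\gamma_\lambda}$ places $\|\widetilde u_{\varepsilon,\lambda}\|_\rho^\rho$ in the regime $O_1(\varepsilon^{N-\frac{N-4}{2}\rho})$ of \eqref{Jj} (avoiding the logarithmic and fast-decay cases), and $\rho>\frac{4(N-2-\gamma_\lambda)}{N-4}$ is exactly the condition $N-\frac{N-4}{2}\rho<2(\gamma_\lambda-\frac{N-4}{2})$, which makes this one term dominate the $O(\varepsilon^{2(\gamma_\lambda-\frac{N-4}{2})})$ error. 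There is no "complementary range" and the Hardy integral supplies no slack. Finally, a smaller omission: you lump all errors into a single $O(\varepsilon^{\alpha})$, but the expansions of the quadratic form and of $\|\widetilde u_{\varepsilon,\lambda}\|_{2^{**}}^{2^{**}}$ carry different exponents, and one must check that $(2^{**}\gamma_\lambda-N)-2(\gamma_\lambda-\frac{N-4}{2})=4-4\alpha\geq0$ so that the critical-norm error does not degrade the leading one; the paper verifies this explicitly.
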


\begin{proof}
To estimate the  mountain pass level of $I_\lambda$ around $0$, we introduce the Talenti functions.
For any $\varepsilon>0$, define
\begin{eqnarray*}
\widetilde{U}_{\varepsilon,\lambda}(x)=\varepsilon^{2-\frac{N}{2}
}\widetilde{U}_\lambda\displaystyle{(\dfrac{x}{\varepsilon})},\qquad \lambda\in[0,\lambda_{4,2}),
\end{eqnarray*}
  where $\widetilde{U}_\lambda(x)>0$ is radially symmetric and is a solution to the critical problem
\begin{eqnarray*}
\Delta^2u-\lambda\dfrac{u}{|x|^4}=u^{2^{**}-1}, &x\in \mathbb{R}^N,\ \ \ N\geq5.
\end{eqnarray*}
Then, according to Theorem 1.1 of \cite{D2}, $\widetilde{U}_{\varepsilon,\lambda}(x)$ is an
extremal function of $S_\lambda$ and
\begin{eqnarray*}
\int_{\mathbb{R}^N}\left(|\Delta \widetilde{U}_{\varepsilon,\lambda}(x)|^2
-\lambda\frac{|\widetilde{U}_{\varepsilon,\lambda}
(x)|^2}{|x|^4}\right)\mathrm{d}x=\displaystyle{\int_{\mathbb{R}^N}|\widetilde{U}
_{\varepsilon,\lambda}(x)|^{2^{**}}\mathrm{d}x}
=S_\lambda^{\frac{N}{4}},
\end{eqnarray*}
where $S_\lambda$ is given in Remark \ref{SS}. Set $r=|x|$, then one also has
\begin{eqnarray*}
\begin{split}
\widetilde{U}_\lambda(r)&= O_1(r^{-\gamma_\lambda}),\ \ \widetilde{U}_\lambda'(r)= O_1(r^{-\gamma_\lambda-1}),\qquad r\rightarrow+\infty,\\
\widetilde{U}_\lambda(r)&= O_1(r^{-\widetilde{\gamma}_\lambda}), \qquad r\rightarrow 0,
\end{split}
\end{eqnarray*}
where $\gamma_\lambda=(N-4)(1-\dfrac{\alpha}{2})$, $\widetilde{\gamma}_\lambda=(N-4)\dfrac{\alpha}{2}$, and
\begin{eqnarray*}
 \alpha:=\varphi(\lambda)=1-\frac{\sqrt{N^2-4N+8-4\sqrt{(N-2)^2+\lambda}}}{N-4},
\qquad  \lambda\in(0,\lambda_{4,2}).
 \end{eqnarray*}
It is directly verifies that $\varphi : [0, \lambda_{4,2}] \rightarrow [0, 1]$ is continuous and strictly increasing.
Therefore, $\alpha\in(0,1)$ and $0<\widetilde{\gamma}_\lambda<\frac{N-4}{2}< \gamma_\lambda< N-4$ for $\lambda\in(0,\lambda_{4,2})$.
%Set $\widetilde{U}_\varepsilon(x)
%=\varepsilon^{2-\frac{N}{2}}\widetilde{U}\displaystyle
%{(\dfrac{x}{\varepsilon})}$, then $\widetilde{U}_\varepsilon(x)$  is also an  %extremal function of $S_\lambda$
%and is a solution of the critical problem
%\begin{eqnarray*}
%\Delta^2u-\lambda\dfrac{u}{|x|^4}=u^{2^*-1}, &x\in \mathbb{R}^N,\ \ \ N\geq5.
%\end{eqnarray*}
%Hence, one has
%\begin{eqnarray*}
%\begin{split}
%S_\lambda=\frac{\int_{\mathbb{R}^N}(|\Delta \widetilde{U}_\varepsilon(x)|^2
%-\lambda\frac{|\widetilde{U}_\varepsilon(x)|^2}{|x|^4})\mathrm{d}x}
%{(\int_{\mathbb{R}^N}|\widetilde{U}_\varepsilon(x)|^{2^*}\mathrm{d}x)^{\frac{2}{2^*}}}
%=(\int_{\mathbb{R}^N}|\widetilde{U}_\varepsilon(x)|^{2^*}\mathrm{d}x)^{\frac{4}{N}}.
%\end{split}
%\end{eqnarray*}
%%It follows that %5$\displaystyle{\int_{\mathbb{R}^N}|\widetilde{U}_\varepsilon(x)|^{2^*}\mathrm{d}x}
%=S_\lambda^{\frac{N}{4}}$.

Let $\tau(x)$ be given in Lemma \ref{Y1} and set $\widetilde{u}_{\varepsilon,\lambda}(x)=\tau(x)\widetilde{U}_{\varepsilon,\lambda}(x)$.
Then as $\varepsilon\rightarrow0$, we have (see \cite{D2,li})
\begin{eqnarray*}
\begin{split}
\int_{\Omega}\left(|\Delta \widetilde{u}_{\varepsilon,\lambda}(x)|^2
-\lambda\frac{|\widetilde{u}_{\varepsilon,\lambda}(x)|^2}{|x|^4}\right)\mathrm{d}x
&=%S_\lambda^{\frac{N}{4}}+O(\varepsilon^{2(\gamma_\lambda-\frac{N-4}{2})})
%+O(\varepsilon^{2^*\gamma_\lambda-N})=
S_\lambda^{\frac{N}{4}}
+O(\varepsilon^{2(\gamma_\lambda-\frac{N-4}{2})}),\\
\|\widetilde{u}_{\varepsilon,\lambda}\|_{2^{**}}^{2^{**}}&=S_\lambda^{\frac{N}{4}}
+O(\varepsilon^{2^{**}\gamma_\lambda-N}),\\
\end{split}
\end{eqnarray*}
and
\begin{eqnarray}\label{Jj}
\|\widetilde{u}_{\varepsilon,\lambda}\|_\rho^\rho&=
\begin{cases}
O_1(\varepsilon^{\rho(\gamma_\lambda-\frac{N-4}{2})}),
&1\leq\rho<\frac{N}{\gamma_\lambda},\\
O_1(\varepsilon^{N-\frac{N-4}{2}\rho}|\ln\varepsilon|),
&\rho=\frac{N}{\gamma_\lambda},\\
O_1(\varepsilon^{N-\frac{N-4}{2}\rho}),
&\frac{N}{\gamma_\lambda}<\rho<2^{**}.\\
\end{cases}
\end{eqnarray}

 %More details can be found in\cite{D2}.
%Let $v_\varepsilon=\frac{u_\varepsilon}{\|u_\varepsilon\|_2{^*}}$, we have
%\begin{eqnarray*}
 %\begin{split}
 %\int_{\mathbb{R}^N}|\Delta v_\varepsilon(x)|^2\mathrm{d}x
 %-\lambda\int_{\mathbb{R}^N}\frac{u_\varepsilon(x)^2}{|x|^s}\mathrm{d}x
%&=S_\lambda
%+O(\varepsilon^{p\gamma_\lambda-N}),\\
%\|v_\varepsilon\|^\rho&=S^{\frac{N}{4}}+O(\varepsilon^{\frac{N-4}{2}}) ?\ \ ?,\\
%\|v_\varepsilon\|_{2^*}^{2^*}&=1.\\
%\end{split}
%\end{eqnarray*}

As was done in the proof of Lemma \ref{lem4}, we consider the fibering maps $\psi_{u}(t)$ given in \eqref{O} with $u=\widetilde{u}_{\varepsilon,\lambda}$. Since \eqref{Ww} still holds for $\psi_{\widetilde{u}_{\varepsilon,\lambda}}(t)$,
there exists a $\widetilde{t}_0>0$, independent of $\varepsilon$,  such that
\begin{eqnarray}\label{t00}
\psi_{\widetilde{u}_{\varepsilon,\lambda}}(t)%=I_\lambda(tv_\varepsilon)
<c(S_\lambda),\qquad t\in(0,\widetilde{t}_0).
\end{eqnarray}

Next, set
$$\widetilde{g}(t)=\dfrac{1}{2}t^2\displaystyle{\int_{\Omega}\left(|\Delta \widetilde{u}_{\varepsilon,\lambda}|^2
-\lambda\dfrac{|\widetilde{u}_{\varepsilon,\lambda}|^2}{|x|^4}\right)\mathrm{d}x}
-\dfrac{1}{2^{**}}t^{2^{**}}\|\widetilde{u}_{\varepsilon,\lambda}\|_{2^{**}}^{2^{**}},$$
then
\begin{eqnarray*}\label{}
\psi_{\widetilde{u}_{\varepsilon,\lambda}}(t)=\widetilde{g}(t)
-\displaystyle{\int_{\Omega}
F(x,t\widetilde{u}_{\varepsilon,\lambda})\mathrm{d}x}.
\end{eqnarray*}
On recalling \eqref{Mmmm} one sees that there exists a positive constant $C$ such that
\begin{eqnarray}\label{Lp1}
\begin{split}
\psi_{\widetilde{u}_{\varepsilon,\lambda}}(t)%=_\lambda(tv_\varepsilon)
&\leq \max\limits_{t\geq \widetilde{t}_0}\widetilde{g}(t)-\int_{\Omega}F(x,t \widetilde{u}_{\varepsilon,\lambda})\mathrm{d}x\\
&\leq
\max\limits_{t\geq 0}\widetilde{g}(t)
-C\widetilde{t}_0^\rho\|\widetilde{u}_{\varepsilon,\lambda}\|_\rho^{\rho}, \qquad t\in[\widetilde{t}_0,\infty).\\
\end{split}
\end{eqnarray}
Direct computation shows that $\widetilde{g}$ attains its maximum at
\begin{eqnarray}\label{Lp2}
\widetilde{t}_\varepsilon^*:= \left(\dfrac{\int_{\Omega}(|\Delta \widetilde{u}_{\varepsilon,\lambda}|^2
-\lambda\frac{|\widetilde{u}_{\varepsilon,\lambda}
|^2}{|x|^4})\mathrm{d}x}{\|\widetilde{u}_{\varepsilon,\lambda}
\|_{2^{**}}^{2^{**}}}\right)^{\frac{1}{2^{**}-2}}
.
\end{eqnarray}
Therefore, by \eqref{Lp1} and \eqref{Lp2}, one sees, for $t\in[\widetilde{t}_0,\infty)$, that
\begin{eqnarray*}
\begin{split}
\psi_{\widetilde{u}_{\varepsilon,\lambda}}(t)
&\leq \widetilde{g}(\widetilde{t}_\varepsilon^*)
-C\widetilde{t}_0^\rho\|\widetilde{u}_{\varepsilon,\lambda}\|_\rho^{\rho}\\
&=\Big(\frac{1}{2}-\frac{1}{2^{**}}\Big)
\frac{\big(\int_{\Omega}(|\Delta \widetilde{u}_{\varepsilon,\lambda}|^2-\lambda\frac{|\widetilde{u}
_{\varepsilon,\lambda}|^2}{|x|^4})\mathrm{d}x\big)^{\frac{N}{4}}}
{\|\widetilde{u}_{\varepsilon,\lambda}\|_{2^{**}}^{\frac{(N-4)2^{**}}{4}}}
-C\widetilde{t}_0^\rho\|\widetilde{u}_{\varepsilon,\lambda}\|_\rho^{\rho}\\
%&\leq\frac{2}{N}\frac{\big(\int_{\Omega}(|\Delta \widetilde{u}_\varepsilon|^2
%-\lambda\frac{|\widetilde{u}_\varepsilon|^2}{|x|^4})\mathrm{d}x\big)^{\frac{N}{4}}}
%{\|\widetilde{u}_\varepsilon\|_{2^*}^{\frac{(N-4)2^*}{4}}}
%-C\widetilde{t}_0^\rho\|\widetilde{u}_\varepsilon\|_\rho^{\rho}\\
&=\frac{2}{N}
\big(S_\lambda^{\frac{N}{4}}
+O(\varepsilon^{2(\gamma_\lambda-\frac{N-4}{2})})\big)^{\frac{N}{4}}\big(S_\lambda^{\frac{N}{4}}
+O(\varepsilon^{2^{**}\gamma_\lambda-N})\big)^{\frac{4-N}{4}}
-C\widetilde{t}_0^\rho\|\widetilde{u}_{\varepsilon,\lambda}\|_\rho^{\rho}\\
%&=\frac{2}{N}S_\lambda^{\frac{N}{4}}
%+O(\varepsilon^{2(\gamma_\lambda-\frac{N-4}{2})})
%+O(\varepsilon^{(2^{**}\gamma_\lambda-N){\frac{N-4}{4}}})
%-C\widetilde{t}_0^\rho\|\widetilde{u}_{\varepsilon,\lambda}\|_\rho^{\rho}\\
&=c(S_\lambda)
+O(\varepsilon^{2(\gamma_\lambda-\frac{N-4}{2})})
-C\widetilde{t}_0^\rho\|\widetilde{u}_{\varepsilon,\lambda}\|_\rho^{\rho}, \qquad as\ \varepsilon\rightarrow0,
\end{split}
\end{eqnarray*}
where in the last equality we have used the fact that
$$(2^{**}\gamma_\lambda-N)-2(\gamma_\lambda-\frac{N-4}{2})=4-4\alpha\geq0.$$

To complete the proof of this lemma, it remains to show, for $\varepsilon$ suitably small,  that
\begin{eqnarray}\label{Wd}
\begin{split}
\psi_{\widetilde{u}_{\varepsilon,\lambda}}(t)
&< c(S_\lambda),  \qquad t\in[\widetilde{t}_0,\infty),
\end{split}
\end{eqnarray}
which is fulfilled once we can prove that
\begin{eqnarray}\label{MM}
O(\varepsilon^{2(\gamma_\lambda-\frac{N-4}{2})})
-C\widetilde{t}_0^\rho\|\widetilde{u}_{\varepsilon,\lambda}\|_\rho^{\rho}< 0.
\end{eqnarray}

In view of \eqref{Jj},  if $\rho>\max\{\frac{N}{\gamma_\lambda},
\frac{4(N-2-\gamma_\lambda)}{N-4}\}$, one  sees that %we  split it up into three cases to discuss: as $\varepsilon\rightarrow0$,
%\begin{eqnarray*}\label{}
%\begin{split}
%(1)&\ if \ 1\leq\rho<\frac{N}{\gamma_\lambda}, \ %O(\varepsilon^{2(\gamma_\lambda-\frac{N-4}{2})})
%-C\widetilde{t}_0^\rho\|\widetilde{u}_\varepsilon\|_\rho^{\rho}= %O(\varepsilon^{2(\gamma_\lambda-\frac{N-4}{2})})
%%%(2)&\ if \ \rho=\frac{N}{\gamma_\lambda}, \ %O(\varepsilon^{2(\gamma_\lambda-\frac{N-4}{2})})-C\widetilde{t}_0^\rho\|\widetilde{u}_\varepsilon\|_\rho^{\rho}
%= O(\varepsilon^{2(\gamma_\lambda-\frac{N-4}{2})})
%-O_1(\varepsilon^{N-\frac{N-4}{2}\rho}|\ln\varepsilon|); \\
%(3)&\ if \ \frac{N}{\gamma_\lambda}<\rho<2^*, \ %O(\varepsilon^{2(\gamma_\lambda-\frac{N-4}{2})})-C\widetilde{t}_0^\rho\|\widetilde{u}_\varepsilon\|_\rho^{\rho}
%= O(\varepsilon^{2(\gamma_\lambda-\frac{N-4}{2})})
%-O_1(\varepsilon^{N-\frac{N-4}{2}\rho}).
%\end{split}
%\end{eqnarray*}
%For case $(1)$, we need
\allowdisplaybreaks  \begin{align*}%\label{}\nonumber   \end{align*}\begin{eqnarray*}
\lim\limits_{\varepsilon\rightarrow0}
 \dfrac{\varepsilon^{2(\gamma_\lambda-\frac{N-4}{2})}}
 {\varepsilon^{N-\frac{N-4}{2}\rho}}=\lim\limits_{\varepsilon\rightarrow0}
 \varepsilon^{\frac{1}{2}[\rho(N-4)-4(N-2-\gamma_\lambda)]}
=0,
%\end{eqnarray*}
\end{align*}
which implies \eqref{MM} is valid for suitably small $\varepsilon$. Therefore, for each $\lambda\in(0,\lambda_{4,2})$,
\eqref{Wd} is valid for suitably small $\varepsilon$, which, together with \eqref{t00}, implies, for $\varepsilon$ suitably small, that
\begin{eqnarray*}\label{}
\sup_{t\geq0}\psi_{\widetilde{u}_{\varepsilon,\lambda}}(t)<c(S_\lambda).
\end{eqnarray*}
Fixing such an $\varepsilon>0$ and letting $\widetilde{u}_\lambda^*\equiv \widetilde{u}_{\varepsilon,\lambda}$,
we obtain \eqref{F1}. The proof is complete.
\end{proof}

\begin{proof}[Proof of Theorem \ref{th} $(iii)$]
With the help of Lemmas \ref{lem2.1} and \ref{r}, we can prove Theorem \ref{th} $(iii)$ in
a way similar to that of Theorem \ref{th} $(i)$ and $(ii)$, and we only sketch the outline.
First, by recalling \eqref{7.1} and \eqref{7.2} with $q=2$ and $s=4$, we see that $I_\lambda$
satisfies the mountain pass geometry around $0$ and  there exists a sequence $\{u_n\}\subset H_0^2(\Omega)$ such that $I_\lambda(u_{n})\rightarrow \widetilde{c}_0$ and $I_\lambda'(u_{n})\rightarrow 0$ in $H^{-2}(\Omega)$ as $n\rightarrow\infty$,
where
\begin{equation*}
\widetilde{c}_0=\inf_{\widetilde{\gamma}\in\widetilde{\Gamma}}
\max_{t\in[0,1]}I_\lambda(\widetilde{\gamma}(t))\
and\ \
\widetilde{\Gamma}=\{\widetilde{\gamma}\in C([0,1],H_{0}^{2}(\Omega)): \widetilde{\gamma}(0)=0, \widetilde{\gamma}(1)=t_{\widetilde{u}_\lambda^*}\widetilde{u}_\lambda^*\},
\end{equation*}
and $\widetilde{u}_\lambda^*$ is given in Lemma \ref{r} which satisfies
 $I_\lambda(t_{\widetilde{u}_\lambda^*}\widetilde{u}_\lambda^*)<0$.
In view of  Lemma \ref{r}, we have
\begin{equation*}
 \widetilde{c}_0
\leq\max\limits_{t\in[0,1]}I_\lambda(tt_{\widetilde{u}_\lambda^*}\widetilde{u}_\lambda^*)
\leq\sup\limits_{t\geq0}I_\lambda(t\widetilde{u}_\lambda^*)<c(S_\lambda).
\end{equation*}
Consequently, by Lemma \ref{lem2.1}, one sees that $I_\lambda(u)$  satisfies the $(PS)_{\widetilde{c}_0}$ condition.
Then there exists a $u$ such that $I_\lambda(u)=\widetilde{c}_0$ and $I_\lambda'(u)=0$,
i.e., $u$ is a nonnegative weak solution to problem \eqref{P1.1}.
The proof is complete.
\end{proof}

\end{document}